\theoremstyle{plain} 		\newtheorem{proposition}{Proposition}[section]
\theoremstyle{plain}		\newtheorem{propdef}[proposition]{Proposition-Definition} %propdef 
\theoremstyle{definition}	\newtheorem{defi}[proposition]{Definition}%[section]
\theoremstyle{plain}		\newtheorem{coro}[proposition]{Corollary}%[section]
\theoremstyle{plain} 		\newtheorem{lemma}[proposition]{Lemma}%[section]
\theoremstyle{definition} 	\newtheorem{example}[proposition]{Example}
\theoremstyle{remark} 		\newtheorem*{remarks}{Remarks}
\theoremstyle{remark} 		\newtheorem*{remark}{Remark}
\newcommand{\PRmn}{\mathbb{P}^m(\mathbb{F}_n)}
\newcommand{\PRtwon}{\mathbb{P}^2(\mathbb{F}_n)}
\newcommand{\Fn}{\mathbb{F}_n}
\newcommand{\PR}[2]{\mathbb{P}^#1(\mathbb{F}_#2)}
\newcommand{\F}[1]{\mathbb{F}_#1}
\newcommand{\ZZ}[1]{\mathbb{Z}\slash#1\mathbb{Z}}
\title{ Wada Dessins associated with Finite Projective Spaces and Frobenius Compatibility}
\author{Cristina Sarti\\
Mathematisches Seminar, Goethe Universität \\
Postfach 111932
D-60054 Frankfurt a.M., Germany\\
\textsf{sarti@math.uni-frankfurt.de}}
\begin{document}
%\parindent{1cm}

\maketitle
%\tableofcontents

\begin{abstract}
\textit{Dessins d'enfants} (hypermaps) are useful to describe algebraic properties of 
the Riemann surfaces they are embedded in.
In general, it is not easy to describe algebraic properties of
the surface of the embedding starting from the combinatorial
properties of an embedded dessin. However, this task becomes easier
if the dessin has a large automorphism group.\par 
In this paper we consider a special type of dessins, so-called
\textit{Wada dessins}. Their underlying graph illustrates the
incidence structure of finite projective spaces $\PR{m}{n}$.
Usually, the automorphism group of these dessins is a
cyclic \textit{Singer group} $\Sigma_\ell$ permuting transitively the 
vertices.
However, in some cases, a second group of automorphisms
$\Phi_f$ exists. It is a cyclic group generated by the 
\textit{Frobenius automorphism}.
We show under what conditions $\Phi_f$ is a group of automorphisms
acting freely on the edges of the considered dessins.

\vskip 5pt
\noindent \textit{\small Keywords: Dessins d'enfants, Wada dessins, bipartite graphs, graph embeddings, difference sets, finite geometries, Riemann surfaces, Frobenius automorphism, Singer groups.}

\vskip 3pt
\noindent \textit{\small Math.\ Subj.\ Class.: 05C10, 30F10, 05B10, 05B25, 51E20, 51D20}
\end{abstract}

%----------SECTION----------------
\section{Introducing dessins d'enfants}

The term \textit{dessins d'enfants} was first used by Grothendieck (1984)
to refer to objects which are very simple but important to describe
Riemann surfaces.
Dessins d'enfants ${\cal D}$ can be defined as hypermaps on compact orientable surfaces.
A hypermap in its \textbf{Walsh representation} \cite{Walsh75} is a bipartite graph drawn without crossings
on a surface $X$ and cutting the surface into simply connected cells (faces).\par
For a vertex of the graph we call \textbf{valency} the number of
incident edges. For a cell, the valency is the number of edges on its boundary.
This number is always even for hypermaps. Edges bordering a cell from both sides have to be counted 
twice (see \cite{StWo01} and \cite{Lan04}).
A characteristic property of a dessin is its {signature} $\left ( p,q,r \right )$, where
\begin{enumerate}
\item
$p$ is the least common multiple of all valencies of the white vertices, 
\item
$q$ is the least common multiple of all valencies of the black vertices,
\item
$2r$ is the least common multiple of all face valencies.
\end{enumerate}
A dessin is called \textbf{uniform} if all white vertices have the same valency $p$, 
all black vertices have the same valency $q$ and all cells have the same valency
$2r$. Riemann surfaces with embedded dessins are algebraic curves of 
special type.
According to Bely\u{\i}'s theorem (see \cite{Belyi80}) a surface $X$ admits a 
model over the field
$\overline{\mathbb{Q}}$ if and only if there exists a 
meromorphic function $\beta$
\[
\beta: \; X \rightarrow \mathbb{P}^1(\mathbb{C})
\]
ramified above at most three points. Without loss of generality, these can 
be identified with $\{0,1, \infty\}$. 
The preimages of $0$ and $1$ are, respectively, the set of white 
and black vertices of the bipartite graph on $X$. The preimages of $\infty$
correspond to points within the faces. Each face contains exactly
one point which is commonly called \textbf{face center} (see e.g. \cite{Lan04}).
This means that we have a dessin on a surface if and only if we may 
describe it with equations whose coefficients are in the field $\overline{\mathbb{Q}}$.\par
In general, it is not easy to relate the combinatorial properties of a dessin to 
algebraic properties of the surface, such as defining equations or the
moduli field, but the task becomes easier if the embedded dessin has a large 
automorphism group. In the best case, this group acts transitively on
the edges of the dessin (for recent results see,
for instance, \cite{JoStWo09}) and we say that the dessin is not only uniform but 
even \textbf{regular}.\vskip 2ex\par
In this paper, we consider dessins whose underlying graph describes the incidence structure of
points and hyperplanes of projective spaces $\PR{m}{n}$ (see Sections \ref{sec_proj_sp}
and \ref{subsec_dessins}). 
The possibility to construct
this kind of dessins was first studied by Streit and Wolfart \cite{StWo01} for
projective planes $\PR{2}{n}$. Starting from some of their results, we examine 
here special kinds of uniform dessins called \textbf{Wada dessins} (see Section 
\ref{subsec_Wada_dessins}).
The cells of these dessins have all the same valency $2 \ell$
and there always exists a cyclic group $\Sigma_\ell$ transitively permuting the edges of
type $\circ$---$\bullet$
and of type $\bullet$---$\circ$ on their boundaries (see \cite{StWo01} and 
\cite{SartiPhD10}). We are interested in determining when the full automorphism group
contains other groups of automorphisms beside $\Sigma_\ell$.\par
We consider the \textbf{Frobenius automorphism} acting on points and on 
hyperplanes of projective spaces $\PR{m}{n}$ (see Section \ref{sec_frob_auto}) 
and we establish under what conditions it induces an 
automorphism of the associated Wada dessins (see Sections \ref{section_Frob_diff_set}
and \ref{subsec_Frob_compat}).
In general, it is not easy to predict the necessary restrictions on the parameters
$m$ and $n$ such that these conditions are satisfied. In Section \ref{sec_nice_case}
we prove that if $n$ and $m+1$ are primes the problem can be solved.\par 
Since in the literature automorphisms of dessins are defined in slightly different ways, 
we point out that here we consider only orientation-preserving automorphisms.
%
%-------------SECTION-----------------
\section{Finite projective spaces}\label{sec_proj_sp}

Let $\mathbb{F}_n^{m+1}$ be the vector space over the finite field $\F{n}$, $n=p^e$ a prime power.
We define the finite projective space $\PR{m}{n}$ as the vector space 
$\mathbb{F}_n^{m+1}\backslash\{0\}$ factorized by the multiplicative group
$\mathbb{F}_n^*$ of order $n-1$:
\begin{equation}\label{eq_defproj}
\PRmn:=(\Fn^{m+1}\backslash\{0\})\slash\Fn^* \, .
\end{equation}
The integer $n$ is called \textbf{order} of $\PRmn$ (\cite{Beutelsp92}, \cite{Demb97}).\par
The number of points of $\PRmn$ is given by
\begin{equation}\label{eq_points_on hyperp}
\ell:=|\PRmn|=\frac{|\Fn^{m+1}\backslash\{0\}|}{|\Fn^*|}=\frac{n^{m+1}-1}{n-1}\, .
\end{equation}
By duality, the integer $\ell$ is also the number of hyperplanes of $\PRmn$.\par
\textbf{Hyperplanes} are subspaces $\mathbb{U}^{m-1}(\Fn)$ of dimension $m-1$ in $\PRmn$, 
thus they correspond to
subspaces of dimension $m$ in $\F{n}^{{m+1}}$. Similarly to 
equation (\ref{eq_points_on hyperp}), we compute the number 
of points on each of them as:
\begin{equation}\label{eq_integer_q}
q:=|\mathbb{U}^{m-1}(\Fn)|=\frac{|\Fn^{m}\backslash\{0\}|}{|\Fn^*|}=\frac{n^{m}-1}{n-1} \, .
\end{equation}
By duality, the integer $q$ is also the number of hyperplanes through each point.\par
The  finite field $\F{{n^{{m+1}}}}$ may be considered as a vector space
over the field $\F{n}$. 
A well known fact is that the multiplicative group
of a finite field is cyclic. Therefore the $m$-dimensional projective
space over $\F{n}$ may be identified with the quotient
\begin{equation}\label{eq:proj_sp_quot}
\PRmn\cong\mathbb{F}_{n^{m+1}}^*\slash \Fn^* \, ,
\end{equation}
which, being considered as a group, is also cyclic of order $\ell$.
Let $g$ be a generator of this group.\par
Due to the identification above we make correspond points
$P_b$ and, by duality, hyperplanes $h_w$ of $\PR{m}{n}$ to powers
of the generator $g$:
\begin{eqnarray}\label{eq_points_hyper_generators}
g^b \leftrightarrow P_b, \qquad g^w \leftrightarrow h_w \qquad b,w \in \mathbb{Z}\slash \ell \mathbb{Z} \, .
\end{eqnarray}
In this way, we obtain a numbering of points and of 
hyperplanes by the integers $b,w \in \{0, \cdots, \ell-1\}$.
Point and hyperplane numberings are closely related to each other and 
are not arbitrary as we will explain more in detail 
in Section \ref{subsec_dessins}.\par
Among the projective linear transformations of $\PR{m}{n}$
we find cyclic projectivities permuting in
a single cycle, i.e. transitively, the set of
points. By duality they also transitively permutes
the set of hyperplanes.
These projectivities generate so-called \textbf{Singer groups}
(see e.g. \cite{Hirsch97}).
Due to (\ref{eq:proj_sp_quot}), it is easy to prove that 
the group $\Sigma_\ell\cong\mathbb{F}_{n^{m+1}}^*\slash \Fn^*$ 
--with $\ell$ defined as in (\ref{eq_points_on hyperp})--
is cyclic and it acts transitively permuting the set of points 
and the set of hyperplanes of $\PR{m}{n}$. 
Hence, we formulate the following
\begin{propdef}\label{singer_group}
Every $\PRmn$ admits a point (hyperplane) transitive cyclic
group of automorphisms. This group is a \textbf{Singer group}.
\end{propdef}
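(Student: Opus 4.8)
The plan is to establish Proposition-Definition~\ref{singer_group} by exploiting the group-theoretic identification of $\PRmn$ already recorded in equation~(\ref{eq:proj_sp_quot}). The key observation is that the quotient group $\mathbb{F}_{n^{m+1}}^*/\Fn^*$ is cyclic of order $\ell=(n^{m+1}-1)/(n-1)$, being a quotient of the cyclic group $\mathbb{F}_{n^{m+1}}^*$; let $\bar g$ be a generator, the image of a generator $g$ of $\mathbb{F}_{n^{m+1}}^*$. Multiplication by $g$ on $\mathbb{F}_{n^{m+1}}^*$ is $\Fn^*$-linear when $\mathbb{F}_{n^{m+1}}$ is viewed as an $(m+1)$-dimensional vector space over $\Fn$, hence descends to a well-defined transformation $\sigma$ of $\PRmn=(\Fn^{m+1}\setminus\{0\})/\Fn^*$. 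So the first step is simply to check that $\sigma$ is a projective linear transformation, i.e.\ an element of $\mathrm{PGL}_{m+1}(\Fn)$.

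Next I would verify that $\sigma$ acts transitively on the set of points. Under the identification~(\ref{eq:proj_sp_quot}), the point $P_b$ corresponds to $\bar g^{\,b}$ for $b\in\mathbb{Z}/\ell\mathbb{Z}$ as in~(\ref{eq_points_hyper_generators}), and $\sigma$ sends $\bar g^{\,b}$ to $\bar g^{\,b+1}$; since $\bar g$ has order exactly $\ell=|\PRmn|$, the cyclic group $\langle\sigma\rangle\cong\mathbb{Z}/\ell\mathbb{Z}$ permutes all $\ell$ points in a single cycle. Transitivity on hyperplanes then follows by duality: a hyperplane is the projectivization of an $m$-dimensional $\Fn$-subspace of $\Fn^{m+1}$, $\sigma$ being $\Fn$-linear maps such subspaces to subspaces of the same dimension, so it permutes the $\ell$ hyperplanes; and because the induced permutation of hyperplanes is, via the standard correlation, conjugate to the permutation of points (or directly: a hyperplane through a fixed point $P_0$ is moved by $\sigma^k$ to a hyperplane through $P_k$, and one counts orbit lengths), the action on hyperplanes is also a single $\ell$-cycle. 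Setting $\Sigma_\ell:=\langle\sigma\rangle$ gives the asserted cyclic Singer group.

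I do not expect a serious obstacle here, since essentially everything has been set up in the preceding paragraphs; the only point requiring a little care is the passage from ``$\sigma$ permutes the hyperplanes'' to ``$\sigma$ permutes them transitively in a single cycle.'' The cleanest way is to note that any power $\sigma^k$ fixing some hyperplane $H$ would have to fix the $q=(n^{m}-1)/(n-1)$ points on $H$ as a set, but a nontrivial power of $\sigma$ fixes no point at all (the point-orbit has length $\ell$), and an argument via the eigenvalue structure of $\sigma$ over $\overline{\Fn}$ — the eigenvalues are the conjugates $g, g^{q}, g^{q^2},\dots$, a single Galois orbit — shows $\sigma$ acts on hyperplanes with the same cycle type as on points. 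Alternatively one simply invokes the classical fact (see e.g.\ \cite{Hirsch97}) that a projectivity which is a single cycle on points is automatically a single cycle on hyperplanes. Either route closes the proof, and we record the resulting group as the \textbf{Singer group} $\Sigma_\ell$ of $\PRmn$.
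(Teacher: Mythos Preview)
Your proposal is correct and follows exactly the route the paper indicates: the paper does not give a detailed proof but simply points to the identification~(\ref{eq:proj_sp_quot}), remarks that it is then ``easy to prove'' that $\Sigma_\ell\cong\mathbb{F}_{n^{m+1}}^*/\Fn^*$ is cyclic and acts transitively on points and hyperplanes, and refers to \cite{Singer38}, \cite{Baumert71}, \cite{Demb97}, \cite{Hirsch97} and \cite[Chapter~II, Section~7]{Huppert79} for a full argument. Your sketch is a faithful unpacking of precisely this approach, with somewhat more care on the hyperplane-transitivity step than the paper itself supplies.
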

This proposition was first proved by Singer (\cite{Singer38}) for projective planes 
$\mathbb{P}^2(\mathbb{F}_n)$. Now it is widely known (\cite{Baumert71}, \cite{Demb97}, \cite{Hirsch97})
that it also holds for spaces of higher dimension. For an exhaustive proof see 
\cite[Chapter II, Section 7 ]{Huppert79}. The existence of a Singer
group cyclically permuting points and hyperplanes justifies the fact that
projective spaces $\PR{m}{n}$ are commonly called 
\textbf{cyclic projective spaces}.\vskip 2ex
Let $\gamma$ be a generating element of $\Sigma_\ell$. Thus, due to
(\ref{eq_points_hyper_generators}) and to (\ref{eq:proj_sp_quot}), 
the action of each element \mbox{$\gamma^a \in \Sigma_\ell$},
\mbox{$a \in \ZZ{\ell}$},
on the points $P_b$ and on the hyperplanes $h_w$ is naturally expressed by:
\begin{align}\label{eq_singer_action}
\gamma^a:& P_b \longmapsto P_{b+a}\; , \nonumber\\
				& h_a \longmapsto h_{w+a} \, .
\end{align}
%
%---------SUBSECTION----
\subsection{Constructing dessins}\label{subsec_dessins}

In order to construct dessins associated with a projective space $\PR{m}{n}$, we need to construct
the corresponding bipartite graph. We first introduce the conventions in Table \ref{table_conventions}.\\
%-------TABLE-----------
\begin{table}[h]
	\begin{center}
			\begin{tabular}{|l||c|}\hline
				point&black vertex $\bullet$\\\hline
				hyperplane&white vertex $\circ$\\\hline
				incidence&joining edge ---\\\hline
			\end{tabular}
		\end{center}
		\caption{\textit{Conventions}}\label{table_conventions}
\end{table}\\
%------END TABLE-----------------
%Points and hyperplanes of $\PR{m}{n}$ are numbered as described in Section
%\ref{sec_proj_sp}, relation (\ref{eq_points_hyper_generators}).\\
\par Incidence between a point $P_b$ and an hyperplane $h_w$ is illustrated
by the bipartite graph through a joining edge between a black vertex 
$b$ and a white vertex $w$.
Recall that points $P_b$ and hyperplanes $h_w$ are numbered with integers
$b,w \in \{0, \cdots, \ell-1\}$ given by the exponents of a generator $g$ of
$\mathbb{F}_{n^{m+1}}^*\slash \Fn^*$ (see Section \ref{sec_proj_sp}, 
Relation (\ref{eq_points_hyper_generators}) ).
This numbering is not arbitrary. We consider the $q$ points on a line.
Thanks to the existence of the Singer group defined in Proposition 
\ref{singer_group}, Singer could show (see \cite{Singer38}) 
that for projective planes 
the integers resulting from differences of 
a line index with the indices of the incident points
form a difference set.
Difference sets are defined in the following way:
\begin{defi}[\cite{Baumert71}]
A $(v,k,\lambda)$-difference set $D=\{d_1, \cdots, d_k\}$ is a collection of $k$ residues modulo $v$, such 
that for any residue $\alpha \not \equiv 0 \mod v$ the congruence
\begin{eqnarray}\label{diffset:eq_relat}
 d_i - d_j \equiv \alpha \mod v
\end{eqnarray}
has exactly $\lambda$ solution pairs $(d_i, d_j)$ with $d_i$ and $d_j$ in $D$.
\end{defi}
In particular, sets $(D+s) \mod v$ with $s \in \ZZ{v}$ are also difference
sets and we call them \textbf{shifts} of $D$. If $\widehat{D}\equiv (t \cdot D + s) \mod v$
with $s \in \ZZ{v}$, $t \in (\ZZ{v})^*$ then $\widehat{D}$ and $D$ are said to be 
\textbf{equivalent}.\par
For projective planes $v$ is equal to $\ell$, which is the total number of points 
(lines), and $k$ is equal to $q$, which is the number of points on a line and, by duality, of lines
through a point. Singer's construction tells us that a point $P_b$ and a line $h_w$ are
incident if and only if
\begin{equation}\label{incidence_relation}
b-w \equiv d_i \mod \ell \, , \quad i \in \ZZ{q} \, ,
\end{equation}
where the $q$ elements $d_i$ are the
elements of a difference set $D$.
It is a well known fact that Singer's construction can be extended to projective spaces
of higher dimension (see, for instance, \cite{Hirsch97}), 
i.e. differences of point indices with the index of the common incident
hyperplane build an \mbox{$(\ell, q, \lambda)$ - difference set}. This difference
set determines a numbering of points depending on hyperplane
numbering and viceversa (see Relation (\ref{incidence_relation})).
Nevertheless, the difference set is not unique
since we may have several difference sets with parameters $(\ell, q, \lambda)$ 
(see \cite{Baumert71}) which are equivalent to $D$ or not. We thus 
fix one difference set $D$ and
one ordering of its elements, unique up to cyclic permutations.\par
According to (\ref{incidence_relation}) and identifying each point and each
hyperplane with its index $b$ or $w$, we choose the local incidence pattern 
given in Figure \ref{fig_local}
\footnote{We remark here that this is not the only possible incidence pattern
we may choose. According to the fixed ordering of the elements
of the difference set,
the white vertices incident with a black vertex are 
ordered anticlockwise and the black vertices incident with a white one are 
ordered clockwise. We have chosen these orderings since we are interested
in special uniform dessins called \textbf{Wada dessins} wich we will introduce later on in
this section and which are the main topic of study of this paper. \par
The choice 
of different orderings is also possible and may give rise to dessins 
which are not of Wada type (see \cite[Chapter 6]{SartiPhD10}).}.\par
%------------------FIGURE------------------------
\begin{figure}[!ht]
	\centering
	\includegraphics[width=.7\linewidth]{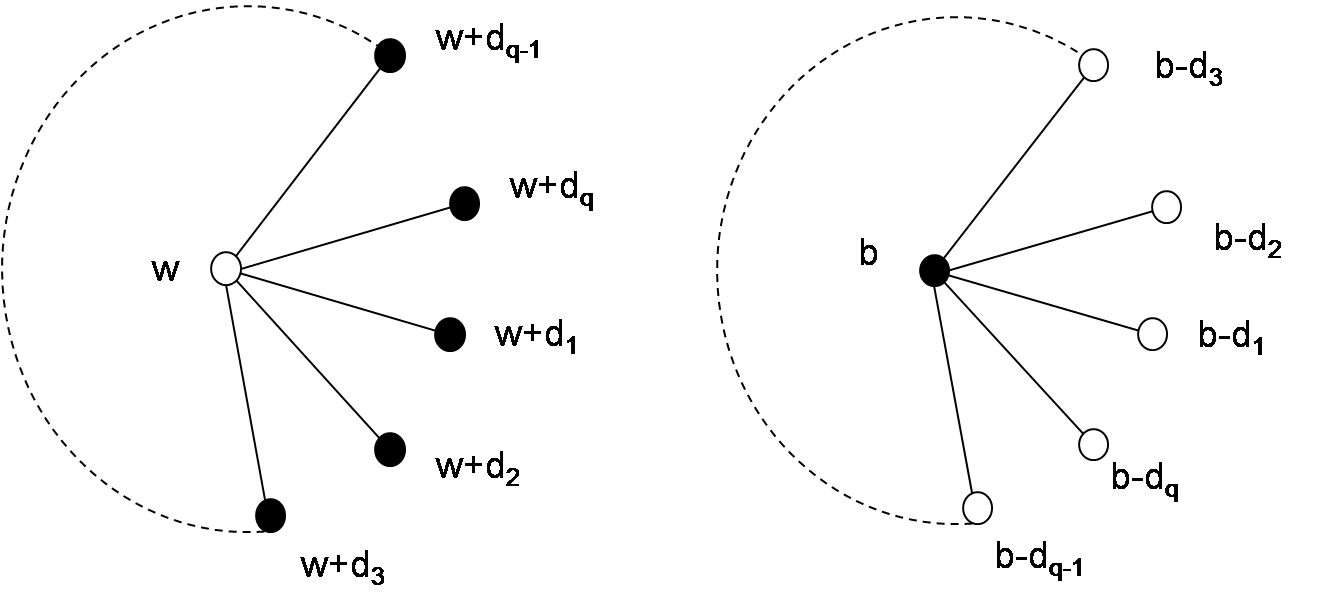}
	\caption{\textit{Local incidence pattern.}}
	\label{fig_local}
\end{figure}
%--------------FIGURE END---------
We construct the embedding of the bipartite graph starting 
with a white vertex $w$ and with an incident edge $(w, w+d_i)$. 
Thus according to the local incidence pattern of the 
black and of the white vertices we have chosen, the next incident edge 
going clockwise around a cell 
is $(w+d_i, w+d_i-d_{i+1})$ followed by 
$(w+d_i-d_{i+1}, w+2d_i-d_{i+1})$. Repeating this procedure, we obtain a cell 
boundary with the sequence of edges given in Figure \ref{fig_cell}.\par
%
%------------------FIGURE------------------------
\begin{figure}[!ht]
	\centering
	\includegraphics[width=.8\linewidth]{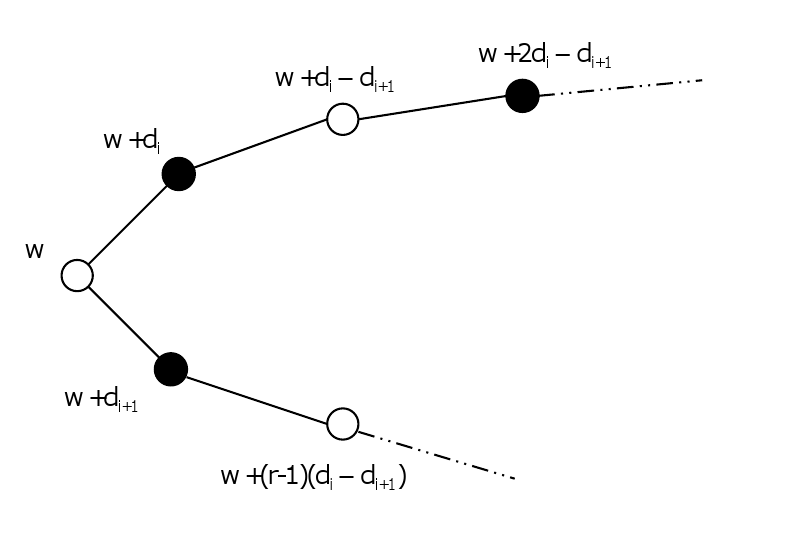}
	\caption{\textit{Construction of a cell incident with
	a white vertex $w$. After $2r$ steps we reach the starting
	edge $(w,w+d_i)$.}}
	\label{fig_cell}
\end{figure}
%-------------- 
%
The construction is applied to successive white vertices 
until $q \cdot \ell$ different edges of type $\circ$---$\bullet$ are 
constructed. Different means here that every two edges differ at least
in one of the two indices $b$ or $w$. The construction terminates since
all $q \cdot \ell$
incidences between points and hyperplanes are represented by 
the bipartite graph
\footnote{For reasons of duality between hyperplanes (white vertices)
and points (black vertices) we may also carry out the construction
starting with edges incident with black vertices. In this case
we come to an end when $q \cdot \ell$ different edges of type 
$\bullet$---$\circ$ are constructed.}.
For each cell, depending on the value of differences $d_{i}-d_{i+1}$
we reach the starting edge after $2r$ steps,
with $r:=\frac{\ell}{gcd((d_{i}-d_{i+1}),\ell)}$. Thus we obtain cells with valencies
$2r$ (see Figure \ref{fig_cell}), where $r$ does not necessarily
have the same value for all cells.

%-------SUBSECTION----
\subsection{Wada dessins}\label{subsec_Wada_dessins}

We now consider a special case of the above construction.\par
Suppose that we may find at least one cyclic ordering of the elements of $D$ such that all differences 
$d_i - d_{i+1}$ are prime to $\ell$. In this case, all cells of the 
dessin we construct have the same valency $2\ell$ as it is easy to check. 
The dessin is therefore uniform 
with signature $( q,q,\ell ) $ and with $q$ cells.
Such dessins have the following nice property. On the boundary of each cell each white and each black
vertex with a given index occurs precisely once. If two white vertices had the same index, then
according to the incidence pattern in Figure \ref{fig_local} and to the 
construction described above we should have
\[
w + \alpha \cdot (d_i - d_{i+1}) \equiv w + \beta \cdot (d_i -d_{i+1}) \; , \quad \alpha, \beta \in \ZZ{\ell} \; ,
\]
but this is possible only for $\alpha \equiv \beta \mod \ell$ since differences 
$(d_i - d_{i+1})$ are prime to $\ell$. In a similar way we may prove
that it is not possible to have two black vertices
with the same index on the same cell boundary.\par
This property was first described by Streit and Wolfart
\cite{StWo01} for bipartite graphs of projective planes $\PRtwon$ 
and is called
\textbf{Wada property}. The choice of the name \textit{Wada} goes back
to the theory of dynamical system and of the \textit{Lakes of Wada}. It is,
in fact, possible to divide the euclidean plane in three regions such
that all the points on the boundary of one region are also on the boundary
of the other two (see e.g. \cite[Chapter 4]{Buskes97}). To emphasize the analogy 
between this phenomenon and the one observed for the cells of their dessins,
Streit and Wolfart called them \textbf{Wada dessins}.\par
More in general, the Wada property may also be 
observed for embeddings of bipartite graphs
associated with projective spaces of higher dimension. By construction, the Wada dessins we obtain are always uniform.
Moreover, due to the action of the cyclic group $\Sigma_\ell$ 
on the elements of projective spaces (see Section 
\ref{sec_proj_sp}), these dessins have the property that at least
the group $\Sigma_\ell$ is a group of automorphisms. 
This group acts transitively on the black and on the white vertices permuting them cyclically
on the cell boundaries. This action induces a transitive action
on the set of edges of type $\circ$---$\bullet$ and on the set of edges of
type $\bullet$---$\circ$ belonging to the boundary of each cell 
(see \cite[Chapter 5]{SartiPhD10} for more details).\par 
We conclude by giving the following definition of \textbf{Wada compatibility} for 
orderings of elements
of difference sets $D$ associated with projective spaces:
\begin{defi}\label{defi_wada}
We call orderings of the $q$ elements of a difference set $D$ associated 
with a projective space 
$\PR{m}{n}$ {\bf Wada compatible} if differences $\mod \ell$ of consecutive 
elements $d_i, d_{i+1}$ of 
$D$ are prime to $\ell$, i.e.:
\begin{eqnarray}
gcd\left((d_{i}-d_{i+1}), \ell\right)=1 \quad \forall i \in \ZZ{q} \, .
\end{eqnarray}
\end{defi}
\begin{remark}

As we have remarked above the difference set $D$ with the chosen element ordering
is not unique. Multiplying $D$ with integers $t \in \left(\ZZ{\ell}\right)^*$ and
shifting it with integers $s \in \ZZ{\ell}$ we obtain new difference sets equivalent
to $D$. If we do not change the element ordering of the original difference set
or if we only permute it cyclically,
the Wada dessins we construct are isomorphic to each other. However,
for non-cyclic permutations and for difference sets
with the same parameters $(\ell, q, \lambda)$ but non-equivalent to $D$,
even if the chosen ordering is Wada compatible, the dessins we construct
are in general not isomorphic to each other. We may, in fact, obtain a different embedding
of the graph into an orientable surface and a different Bely\u{\i} function.\par 
For more general and open questions
about if and how many non-equivalent difference sets with the same parameter set exist,
see e.g. \cite{Baumert71}.
\end{remark}
%
%-------SECTION---------------------
\section{The Frobenius Automorphism}\label{sec_frob_auto}

We consider again the identification of the projective space $\PR{m}{n}$, $n=p^e$,
with the quotient of multiplicative cyclic groups:  
\[
\PRmn\cong\mathbb{F}_{n^{m+1}}^*\slash \Fn^* \, .
\]
Since $\mathbb F_{n^{m+1}}$ is
a finite field, we know that there exists an automorphism 
$\sigma$, the {\bf Frobenius automorphism},
acting on the elements $a \in \F{{n^{m+1}}}$ in the following way:
\begin{eqnarray*}
	\begin{array}{clcl}
		\sigma:  & \mathbb{F}_{n^{m+1}} & \longrightarrow & \mathbb{F}_{n^{m+1}}  \\[1ex]
		  & a & \longmapsto &   a^{p} \, .
	\end{array}
\end{eqnarray*}
The Frobenius automorphism generates the Galois group Gal$(\mathbb F_{n^{m+1}}/\mathbb F_p)\cong \Phi_f$ as 
a cyclic group of order $f=e \cdot (m+1)$.\par
As we have seen in Section \ref{sec_proj_sp}, we may identify points $P_b$ and 
hyperplanes $h_w$
of $\PR{m}{n}$ with powers of a generator of $\mathbb{F}_{n^{m+1}}^*\slash \Fn^*$ (see 
Relation (\ref{eq_points_hyper_generators})).
Thus each \mbox{$\sigma^u \in \Phi_f$, $u \in \ZZ{f}$} acts on $P_b$ and $h_w$ as:
\begin{eqnarray}\label{eq_action_Cf_indices}
	\begin{array}{cccc}
		\sigma^u: & P_b & \longmapsto &   P_{b{p^u}}\\
			  & h_w & \longmapsto & h_{w{p^u}} \, .
	\end{array}
\end{eqnarray}
The action of $\Phi_f$ subdivides the set of points ${\cal P}$ and 
the set of hyperplanes ${\cal H}$
into orbits with lengths $\varphi \in \mathbb{N}$, $\varphi \mid f$. 
Each set always contains at least one orbit of length one, i.e. the orbit of the point $P_0 \in {\cal P}$ 
and the orbit of the hyperplane $h_0 \in {\cal H}$. In fact,
\begin{eqnarray*}
	\begin{array}{ccccr}
		\sigma^u: & P_0 & \longmapsto &   P_0  & \\
			& h_0 & \longmapsto & h_0 & \forall u \in \mathbb{Z}/f\mathbb{Z}\, .
	\end{array}
\end{eqnarray*}
%
%----------SECTION--------
\subsection{Frobenius difference sets}\label{section_Frob_diff_set}

Not every automorphism of projective spaces $\PR{m}{n}$ leads to
an automorphism of associated dessins d'enfants.
Whether the Frobenius automorphism induces an automorphism of associated 
dessins or not, depends, basically, on the difference set we choose for 
the construction. First of all, let us look at an example.
\begin{example}\label{example_P_F2_1}
We consider the projective space $\PR{4}{2}$ with parameters $\ell=31$ and $q=15$.
For the construction of associated dessins we use the difference set $D$ and
the shift $D'\equiv D-1 \mod 31$ with their elements ordered in the following way:
\begin{align*}
D &=\{1,3,15,2,6,30,4,12,29,8,24,27,16,17,23 \}\mod 31 \, ,\\[2ex]
D' &\equiv D - 1 \mod 31 \\
    &= \{0,2,14,1,5,29,3,11,28,7,23,26,15,16,22\} \mod 31 \, .
\end{align*}
Recalling the incidence pattern given in Figure \ref{fig_local}, 
we construct two dessins ${\cal D}$ and ${\cal D}'$. We are sure
that they have the Wada property, since $\ell=31$ is prime. The dessins have
signature $(15,15,31)$ and 15 cells.
%------------------FIGURE--------------------
\begin{figure}[!ht]
%\begin{center}
\includegraphics[width=.5\linewidth]{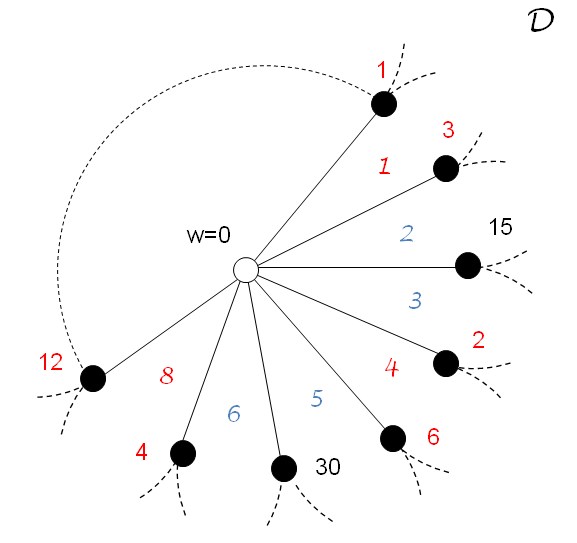}
\includegraphics[width=.5\linewidth]{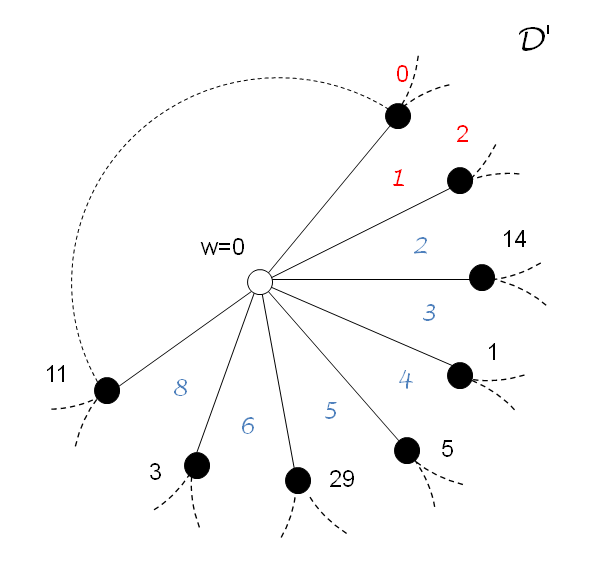}
%\end{center}
\caption{\textit{Sketch of dessins $\cal D$ and ${\cal D}'$
associated with $\PR{4}{2}$. In red we emphasize the action of the Frobenius
automorphism $\sigma$ on the cells. The action of $\sigma$ turns out to be
an automorphism of the dessin $\cal D$ but not
of the dessin ${\cal D}'$ (see text for details).}}
\label{fig_dessinsP4_F2}
\end{figure}
%---------------END FIGURE-------------
Due to the action on points and on hyperplanes of $\PR{4}{2}$ (see
Relation (\ref{eq_action_Cf_indices})), the Frobenius automorphism $\sigma$ acts 
on the vertices of each dessin by a multiplication with the prime two:
\begin{align}\label{equ_frob_action_P4_F2}
\sigma: &\quad b \longmapsto b \cdot 2 \, , \nonumber \\
				&\quad w \longmapsto w \cdot 2 \, .
\end{align}
Lookig at Figure \ref{fig_dessinsP4_F2} and with an easy check of the action on
the vertices, we observe that $\sigma$ is an automorphism of
$\cal D$ but not of ${\cal D}'$. On ${\cal D}$ it rotates the cells 
around the fixed vertex $w=0$ --and by duality also around $b=0$--
by an angle $\omega = \frac{2 \pi}{5}$.\par
The reason for the different action is due to the different behaviour of the sets 
$D$ and $D'$ under the action of $\sigma$. According to Singer's construction, elements
of difference sets associated with projective spaces $\PR{m}{n}$ correspond to 
differences of indices of points and of incident hyperplanes. 
Thus, recalling the action of $\sigma$ on points and hyperplanes
(see Relation (\ref{eq_action_Cf_indices})), its action on the elements of $D$ and $D'$
is a multiplication with the integer two.
Under such multiplication the set $D$ is fixed up to
a cyclic permutation of its elements. On the contrary, the set
$D'$ is not fixed, as it is easy to check:
\begin{align*}
&\sigma(D)=\{2,6,30,4,12,29,8,24,27,16,17,23,1,3,15 \}\mod 31 \, ,\\[3ex]
&\sigma(D')=\{0,4,28,2,10,27,6,22,25,14,15,21,30,1,13\} \mod 31 \, .
 \end{align*} 
\end{example}
\bigskip
We will analyse more closely the ordering of the elements of $D$ in the next section.\par
Here we formulate the following
\begin{propdef}\label{prop:frobenius_auto}
Let $\PR{m}{n} \cong \F{{n^{m+1}}}^* \slash \F{n}^*$ be a projective space, 
$n=p^e$, $p$ prime, containing $\ell$ points and $\ell$ hyperplanes
whose incidence can be described using difference sets.
The Frobenius automorphism determines at least one
difference set $D_f$ fixed under the action of 
\mbox{$\Phi_f\cong \textrm{Gal}(\mathbb F_{n^{m+1}}/\mathbb F_p)$}.
Multiplying $D_f$ with integers \mbox{$t \in (\ZZ{\ell})^*$},
we obtain further difference sets fixed under the action of $\Phi_f$.\par
We call $D_f$ and all difference sets $t \cdot D_f$ \textbf{Frobenius difference sets}.
\end{propdef}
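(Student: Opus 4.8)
\section*{Proof proposal}

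The plan is to produce $D_f$ \emph{explicitly}, as the Singer difference set attached to a cleverly chosen, Frobenius-invariant reference hyperplane. Recall from Section~\ref{sec_proj_sp} that, once we have fixed a generator $\theta$ of $\F{n^{m+1}}^*$ (so that $g=\theta\,\F{n}^*$ generates $\F{n^{m+1}}^*\slash\F{n}^*\cong\PR{m}{n}$) together with a hyperplane $h_0$, i.e.\ an $m$-dimensional $\F{n}$-subspace $W_0\subseteq\F{n^{m+1}}$, the numbering of points $P_b\leftrightarrow\theta^b\F{n}^*$ and of hyperplanes $h_w\leftrightarrow\theta^w W_0$ is such that $P_b$ and $h_w$ are incident precisely when $b-w\in D_0$, where $D_0:=\{\,b\bmod\ell:\theta^b\in W_0\,\}$ is the associated $(\ell,q,\lambda)$-difference set of Singer's construction. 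The only genuinely nontrivial step is the choice of $h_0$: I take $W_0:=\ker\!\big(\Tr_{\F{n^{m+1}}/\F{n}}\big)$. Since $\Tr_{\F{n^{m+1}}/\F{n}}$ is a surjective $\F{n}$-linear map, $W_0$ has $\F{n}$-dimension $m$, hence is a genuine hyperplane; and since $\Tr_{\F{n^{m+1}}/\F{n}}(a)=\sum_{j=0}^{m}a^{n^{j}}$ commutes with the $p$-th power map, the subspace $W_0$ is stable under the Frobenius, $\sigma(W_0)=W_0$.

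Now set $D_f:=\{\,b\bmod\ell:\theta^b\in W_0\,\}$; this is well defined modulo $\ell$ because $W_0$ is closed under multiplication by $\F{n}^*$, and it is an $(\ell,q,\lambda)$-difference set describing the incidence of $\PR{m}{n}$. With this numbering the generator $\sigma$ of $\Phi_f$ acts on point indices by $b\mapsto pb\bmod\ell$ as in Relation~(\ref{eq_action_Cf_indices}); this is exactly where $\sigma(W_0)=W_0$ enters, since it is what makes the index-$0$ hyperplane Frobenius-fixed and hence the chosen point/hyperplane numbering Frobenius-compatible. Using $\sigma(W_0)=W_0$ once more we obtain, for every $b$,
\[
b\in D_f\iff\theta^b\in W_0\iff(\theta^b)^{p}=\theta^{pb}\in W_0\iff pb\in D_f,
\]
that is, $p\,D_f\equiv D_f\pmod\ell$. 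Iterating, $\sigma^{u}$ acts as multiplication by $p^{u}$, so $\sigma^{u}(D_f)=p^{u}D_f=D_f$ for all $u\in\ZZ{f}$, and $D_f$ is fixed, as a set, by the whole group $\Phi_f\cong\textrm{Gal}(\F{n^{m+1}}/\Fp)$.

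For the last assertion let $t\in(\ZZ{\ell})^{*}$. Then $t\,D_f$ is again a difference set with the same parameters $(\ell,q,\lambda)$, equivalent to $D_f$ and hence still describing the incidence of $\PR{m}{n}$ (cf.\ the Remark in Section~\ref{subsec_Wada_dessins}). Since multiplication by $p$ and multiplication by $t$ are commuting operations in the ring $\ZZ{\ell}$,
\[
p\,(t\,D_f)=t\,(p\,D_f)=t\,D_f,
\]
so $t\,D_f$ is fixed by $\Phi_f$ as well. This proves the statement; naming $D_f$ and all the $t\,D_f$ \textbf{Frobenius difference sets} is then just a definition.

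The only delicate point, and thus the step I expect to be the real obstacle, is the choice of the reference hyperplane: one must take $h_0$ to be $\sigma$-invariant so that the fixed numbering of points and hyperplanes is genuinely compatible with the Frobenius action recorded in Relation~(\ref{eq_action_Cf_indices}); once $h_0=\ker\Tr_{\F{n^{m+1}}/\F{n}}$ is chosen the rest is formal. An alternative route, avoiding the explicit trace hyperplane, is to observe that $p$ is a numerical multiplier of \emph{every} Singer difference set $D_0$ (immediate from the fact that $\sigma$ is an automorphism of the incidence structure, which forces $p\,D_0=D_0+s$ for a suitable shift $s$) and then to invoke the classical lemma that a multiplier always fixes some translate of $D_0$; this applies here because $\gcd(q-\lambda,\ell)=\gcd(n^{m-1},\ell)=1$ (see \cite{Baumert71}). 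The direct construction above, however, is shorter and more explicit.
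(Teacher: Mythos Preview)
Your argument is correct and follows essentially the same route as the paper: take the difference set given by the point-indices on a Frobenius-fixed hyperplane $h_0$, observe that this set is $\Phi_f$-stable, and then use commutativity of multiplication in $\ZZ{\ell}$ for the $t\cdot D_f$ part. The only difference is one of presentation rather than substance. The paper works inside the numbering already set up in Section~\ref{sec_proj_sp}, where $\sigma:h_w\mapsto h_{wp}$ is taken as given, so $h_0$ is trivially fixed; it then invokes incidence preservation of $\Phi_f$ as a collineation group to conclude that the point set $\mathcal{P}_0$ on $h_0$, and hence its index set $D_f$, is $\Phi_f$-invariant. You instead make the reference hyperplane explicit as $W_0=\ker\Tr_{\F{n^{m+1}}/\F{n}}$ and verify $pD_f=D_f$ by a direct field computation. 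Your version has the virtue of pinning down why the hyperplane numbering is Frobenius-compatible in the first place (a point the paper leaves implicit in the phrase ``by duality''), and your mention of the multiplier-theorem alternative is a nice aside, but the underlying idea---points on the fixed hyperplane $h_0$ yield the Frobenius difference set---is identical.
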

\begin{proof}
As we have seen in Section \ref{sec_frob_auto}, $\Phi_f$ divides the set of 
points $\cal P$ and the set of
hyperplanes $\cal H$ of $\mathbb P^m(\mathbb F_n)$ into orbits with possibly different 
lengths. In particular, there is always at least one orbit of length one in each set. In fact, 
$\Phi_f$ always fixes the elements $P_0 \in {\cal P}$
and $h_0 \in {\cal H}$. Let us consider the hyperplane $h_0$. 
Since $\Phi_f$ is a group of automorphisms of $\PR{m}{n}$, each
$\sigma^u \in \Phi_f$ preserves incidence. If we now consider the set 
${\cal P}_0=\lbrace P_{i_0},P_{i_1},\cdots,P_{i_{q-1}}\rbrace$ of points on $h_0$, since
\begin{eqnarray*}
 \sigma^u: h_0 \longmapsto h_0 \quad \forall \sigma^u \in \Phi_f \, ,
\end{eqnarray*}
by incidence preservation we have as well:
\begin{eqnarray*}
 \sigma^u: {\cal P}_0 \longmapsto {\cal P}_0 \quad \forall \sigma^u \in \Phi_f\, .
\end{eqnarray*}
According to Singer's construction,
differences of point indices with the index of the common incident hyperplane
describe a difference set (see Section \ref{subsec_dessins}), thus the indices of
the points belonging to ${\cal P}_0$ form the Frobenius difference set $D_f$
we are looking for. It is easy to see that multiplying
$D_f$ with integers $t \in (\ZZ{\ell})^*$ we still obtain 
Frobenius difference sets. In fact, 
$\Phi_f$ acts on the elements of $D_f$ and of each $t \cdot D_f$ by 
multiplication with powers $p^u$, $u \in \ZZ{f}$, so we have:
\begin{eqnarray*}
p^u \cdot t \cdot D_f = t \cdot p^u \cdot D_f \equiv t \cdot D_f \mod \ell 
\quad \forall t \in (\ZZ{\ell})^* \, ,
\end{eqnarray*}
i.e. each $t \cdot D_f$ is fixed under the action of $\Phi_f$.
\end{proof}
\begin{remark}
The difference set $D_f$ and the sets $t \cdot D_f$ determined by 
$\Phi_f$ may not be unique. 
In fact, as we have already remarked in Sections 
\ref{subsec_dessins} and \ref{subsec_Wada_dessins}, other difference sets 
$D_f$ with parameters $(\ell,q,\lambda)$ may exist which are equivalent or 
non-equivalent to $D_f$ and are fixed by $\Phi_f$. 
The case of difference sets resulting from shifts of $D_f$ 
will be analysed more closely later on in this section.
\end{remark}
The above proof implies
\begin{coro}\label{coro_Cf_action_FrobAuto}
The cyclic group $\Phi_f$ acts on the elements of a Frobenius difference set $D_f$
by a multiplication with powers $p^u$, $u \in \ZZ{f}$ and we have
\[
 p^u D_f\equiv D_f \mod \ell \quad \forall u \in (\mathbb Z/f \mathbb Z) \, .
\]
\end{coro}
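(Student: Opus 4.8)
The plan is to read the statement off directly from the construction of $D_f$ carried out in the proof of Proposition-Definition \ref{prop:frobenius_auto}. Recall that there $D_f$ was obtained as the index set of the points ${\cal P}_0=\lbrace P_{i_0},\ldots,P_{i_{q-1}}\rbrace$ lying on the fixed hyperplane $h_0$, which has index $w=0$. By the incidence relation (\ref{incidence_relation}), a point $P_b$ is incident with $h_0$ exactly when $b-0\equiv d_i\mod\ell$ for some $d_i\in D_f$; hence the elements of $D_f$ are literally the indices $i_0,\ldots,i_{q-1}$ of the points of ${\cal P}_0$.

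First I would recall how $\sigma^u\in\Phi_f$ acts on point indices: by Relation (\ref{eq_action_Cf_indices}) we have $\sigma^u:P_b\longmapsto P_{b p^u}$, i.e. on the level of indices $\sigma^u$ is multiplication by $p^u$. Moreover $p$ is prime to $\ell$: since $\ell=1+n+n^2+\cdots+n^m$ and $n=p^e\equiv 0\mod p$, we get $\ell\equiv 1\mod p$, so each $p^u$ is a unit in $\ZZ{\ell}$ and multiplication by $p^u$ is a genuine permutation of $\ZZ{\ell}$. Restricting this permutation to $D_f$ yields precisely the "multiplication with powers $p^u$" action asserted in the corollary.

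Next I would invoke incidence preservation exactly as in the proof of Proposition-Definition \ref{prop:frobenius_auto}: every $\sigma^u$ fixes $h_0$, and being a projective automorphism it preserves incidence, so it maps the point set ${\cal P}_0$ of $h_0$ onto itself. Translating to indices, multiplication by $p^u$ sends $D_f$ onto $D_f$, that is
\[
p^u D_f\equiv D_f \mod \ell \qquad \forall\, u\in \ZZ{f},
\]
which is the claim. Equivalently, since $\Phi_f=\langle\sigma\rangle$, it suffices to verify $pD_f\equiv D_f\mod\ell$ and then iterate $u$ times.

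There is essentially no obstacle: the corollary merely makes explicit the last displayed computation in the proof of Proposition-Definition \ref{prop:frobenius_auto}, the only point worth stressing being that the action on $D_f$ is by the units $p^u\in(\ZZ{\ell})^*$ rather than by arbitrary residues, which is guaranteed by $\gcd(p,\ell)=1$.
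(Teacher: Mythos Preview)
Your proposal is correct and follows the same route as the paper: the corollary is presented there simply as an immediate consequence of the proof of Proposition-Definition~\ref{prop:frobenius_auto}, namely that $D_f$ is the index set of the points on the fixed hyperplane $h_0$ and that $\Phi_f$ acts on indices by multiplication with $p^u$ while preserving ${\cal P}_0$. Your added observation that $\gcd(p,\ell)=1$ (so the $p^u$ are units) is a useful clarification not made explicit at this point in the paper.
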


Due to the action of $\Phi_f$ on points and hyperplanes, 
from the above proof it also follows that $\Phi_f$ divides
the points on $h_0$ into orbits with lengths $\varphi$, $\varphi \in \mathbb{N}$, 
$\varphi \mid f$. As $D_f$ consists of the indices of
these points, this means that $\Phi_f$ also divides the elements of $D_f$ into orbits 
with lengths $\varphi$. Hence, we formulate the following
\begin{coro}\label{coro_Frobenius_orbits}
Under the action of $\Phi_f$, the elements of a Frobenius difference set $D_f$ are 
subdivided into orbits with lengths
$\varphi, \, \varphi \in \mathbb N, \, \varphi \mid f$. 
These orbits correspond to orbits of 
points in $\mathbb P^m(\mathbb F_n)$.
\end{coro}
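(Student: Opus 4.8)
The plan is to reduce the statement to the orbit--stabilizer theorem applied to the finite cyclic group $\Phi_f$, using only facts already assembled in the discussion preceding the corollary. Recall from the proof of Proposition-Definition \ref{prop:frobenius_auto} that $\Phi_f$ fixes the hyperplane $h_0$ and therefore, by incidence preservation, maps the set $\mathcal{P}_0 = \lbrace P_{i_0}, \dots, P_{i_{q-1}} \rbrace$ of points lying on $h_0$ onto itself; thus $\Phi_f$ acts on $\mathcal{P}_0$. Since $h_0$ carries the index $0$, Singer's construction identifies $D_f$ with the set of indices $\lbrace i_0, \dots, i_{q-1} \rbrace$ of the points of $\mathcal{P}_0$, and the assignment $P_{i_k} \mapsto i_k$ is a bijection between $\mathcal{P}_0$ and $D_f$.

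The first step is to verify that this bijection is $\Phi_f$-equivariant. By Relation (\ref{eq_action_Cf_indices}) one has $\sigma^u(P_{i_k}) = P_{i_k p^u}$, whereas by Corollary \ref{coro_Cf_action_FrobAuto} the induced action on $D_f$ sends $d_k \mapsto p^u d_k \bmod \ell$; since $d_k = i_k$, the two actions coincide under the bijection. Consequently the decomposition of $D_f$ into $\Phi_f$-orbits is transported verbatim from the decomposition of $\mathcal{P}_0$ into $\Phi_f$-orbits, which gives the second assertion of the corollary, namely that the orbits of elements of $D_f$ correspond to orbits of points of $\mathbb{P}^m(\mathbb{F}_n)$.

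It then remains to bound the orbit lengths. Here I would invoke the orbit--stabilizer theorem: for any $d \in D_f$ the length of its $\Phi_f$-orbit equals the index $[\Phi_f : \mathrm{Stab}_{\Phi_f}(d)]$, which divides $|\Phi_f| = f = e\cdot(m+1)$. Hence every orbit length $\varphi$ satisfies $\varphi \in \mathbb{N}$ and $\varphi \mid f$, completing the proof.

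I do not anticipate a genuine obstacle: the substance is precisely the observation, already implicit in the paragraph before the corollary, that the points on $h_0$ and the elements of $D_f$ form one and the same $\Phi_f$-set. The only point that deserves an explicit line is the equivariance in the first step, and even that is immediate once one notes that the index of a point incident with $h_0$ equals its difference with the (zero) index of $h_0$, so that the $\Phi_f$-action by multiplication with powers of $p$ on points matches the one on difference-set elements recorded in Corollary \ref{coro_Cf_action_FrobAuto}.
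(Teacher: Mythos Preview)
Your proposal is correct and follows essentially the same route as the paper: the corollary is justified in the paragraph immediately preceding it by observing that $D_f$ consists of the indices of the points on the fixed hyperplane $h_0$, so the $\Phi_f$-orbit decomposition of $D_f$ is inherited from that of ${\cal P}_0$, with orbit lengths dividing $|\Phi_f|=f$. You simply make explicit the equivariance check and the appeal to orbit--stabilizer that the paper leaves implicit.
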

\begin{example}\label{example_orbits_P4_F2}
In the above example for $\PR{4}{2}$ the Frobenius automorphism 
generates the Galois group $\Phi_5\cong\textrm{Gal}(\F{2^5}\slash \F{2})$. This group 
acts on the points $P_i$ and on the hyperplanes $h_i$
of $\PR{4}{2}$ by a multiplication with powers $2^u$, $u \in \ZZ{5}$:
\begin{eqnarray*}
 \begin{array}{lc@{\,\longmapsto\,}cr}
 \sigma^u \in \Phi_5:	& P_i & P_{i{2^u}} \\[2ex]
		& h_i & h_{i{2^u}}, & u\in (\mathbb Z/5\mathbb Z) \, .
 \end{array} 
\end{eqnarray*}
The elements of the difference set $D_5$ may be identified
with the indices of the 15 points on the fixed hyperplane $h_0$. 
The difference set $D_5$ is 
fixed under the action of $\sigma$ and therefore of $\Phi_5$. Its elements
are subdivided by $\Phi_5$ into three orbits of length five:
\[
\{1,2,4,8,16\} \, , \quad \{3,6,12,24,17\} \, , \quad \{15,30,29,27,23\} \, .
\]
\end{example}
\begin{example}\label{example_P3_F3_frobenius_difference_sets}
Let us consider the projective space 
$\mathbb P^3(\mathbb F_3)\cong\mathbb F_{3^4}^*/\mathbb F_{3}^*$
with $40$ points and, by duality, with $40$ hyperplanes. 
The Frobenius automorphism $\sigma$ generates the Galois group 
$\Phi_4 \cong \textrm{Gal}(\mathbb F_{3^4}/\mathbb F_3)$ which acts on the points $P_i$ and on
the hyperplanes $h_i$ as
\begin{eqnarray*}
 \begin{array}{lc@{\,\longmapsto\,}cr}
 \sigma^u \in \Phi_4:	& P_i & P_{i{3^u}}  \\[2ex]
		& h_i & h_{i{3^u}}, & u \in (\mathbb Z/4\mathbb Z) \, .
 \end{array} 
\end{eqnarray*}
The elements of the 
following $(40,13,4)$-difference set (\cite{Baumert71}) may be identified with
the indices of the 13 points on the hyperplane $h_0$ fixed by $\Phi_4$:
\[
D_4=\lbrace 21, 22, 23, 25, 26, 29, 34, 35, 38, 0, 5, 7, 15  \rbrace \mod 40 \, .
\]
The set $D_4$ is a Frobenius difference set and
it is easy to prove that multiplying it with powers $3^u$ we only 
have a permutation of its elements, i.e. we have
\[
3^u D_4 \equiv D_4 \mod 40 \quad \forall u \in (\mathbb Z/4\mathbb Z) \, .
\]
The cyclic group $\Phi_4$ subdivides the elements of $D_4$ into the following five orbits:
\[
\lbrace {21}, {23}, {29}, {7} \rbrace, \lbrace {22}, {26}, {34}, {38} \rbrace,
\lbrace {25}, {35}\rbrace, \lbrace {5}, {15}\rbrace, \lbrace 0 \rbrace \, .
\]
\end{example}
\bigskip
We have seen that difference sets which result from multiplications of Frobenius 
difference sets $D_f$ by elements $t \in (\ZZ{\ell})^*$ are still Frobenius
difference sets fixed by $\Phi_f$. Therefore, it is reasonable to ask
about shifts. Indeed, depending on the number of fixed 
points and of fixed hyperplanes
of $\Phi_f$ we can have more than one Frobenius difference set:
\begin{proposition}\label{prop:frobenius_dset_unicity}
Let $\PR{m}{n}\cong \F{{n^{m+1}}}^*\slash \F{n}^*$ be a projective space, $n=p^e$, 
$p$ prime. The projective space $\PR{m}{n}$ contains $\ell$ points and
$\ell$ hyperplanes whose incidence can be described using a Frobenius difference set $D_f$.
No shifts of $D_f$ are allowed 
if and only if only $h_0$ and equivalently only $P_0$
are fixed by $\Phi_f \cong \textrm{Gal}(\F{{n^{m+1}}}\slash \F{p})$. 
If there are more elements fixed, we will have shifts
of $D_f$ which are still Frobenius difference sets and:
\[
\# \textrm{Frobenius difference sets as shifts of } D_f = 
\# \textrm{ fixed hyperplanes (fixed points) of } \Phi_f \, .
\]
\end{proposition}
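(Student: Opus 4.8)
The plan is to set up a bijection between shifts of $D_f$ that are themselves Frobenius difference sets and the fixed hyperplanes of $\Phi_f$, using Singer's construction as the dictionary. First I would recall that by Proposition-Definition \ref{prop:frobenius_auto} and its proof, a Frobenius difference set arises precisely as the set of point-indices lying on a hyperplane that is fixed by $\Phi_f$; conversely, if $h_c$ is any hyperplane fixed by $\Phi_f$, then incidence preservation forces the set of points on $h_c$ to be permuted by $\Phi_f$, and by Singer's incidence relation (\ref{incidence_relation}) the indices of those points form the set $\{d_i + c : d_i \in D_f\}$, i.e. the shift $D_f + c \mod \ell$. So every fixed hyperplane yields a shift of $D_f$ which is a Frobenius difference set.

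Next I would prove the converse direction: if a shift $D_f + c \mod \ell$ is a Frobenius difference set, meaning it is fixed under multiplication by $p^u$ for all $u$, then $p^u(D_f + c) \equiv D_f + c \mod \ell$; combining this with $p^u D_f \equiv D_f \mod \ell$ (Corollary \ref{coro_Cf_action_FrobAuto}) gives $p^u c \equiv c \mod \ell$ for all $u$, which says exactly that the hyperplane $h_c$ (equivalently the point $P_c$) is fixed by $\Phi_f$ via the action (\ref{eq_action_Cf_indices}). Thus the map $c \mapsto D_f + c$ from fixed hyperplanes to shift-type Frobenius difference sets is well-defined and surjective; injectivity is immediate because distinct shifts $c \not\equiv c' \mod \ell$ give distinct sets (the difference set $D_f$ does not have period dividing $\ell$ unless $\ell = 1$, or more simply because a shift is determined by its cyclic translate). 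This establishes the counting formula, and the first assertion ("no shifts are allowed iff only $h_0$ and $P_0$ are fixed") is just the case where the count equals one.

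One subtlety I would want to handle carefully is the meaning of ``no shifts of $D_f$ are allowed.'' I would interpret it as: the only shift of $D_f$ that is again a Frobenius difference set is $D_f$ itself (the trivial shift $c = 0$, corresponding to $h_0$). With that reading the equivalence is clean. I would also make explicit that the duality between points and hyperplanes makes ``fixed hyperplanes'' and ``fixed points'' interchangeable in the count, since $\Phi_f$ acts on both index sets by the same multiplication $w \mapsto w p^u$, so the fixed sets $\{c \in \ZZ{\ell} : p^u c \equiv c \ \forall u\}$ literally coincide.

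The main obstacle I anticipate is not any deep step but rather pinning down the logical content of ``shift of $D_f$ that is a Frobenius difference set'' versus an arbitrary difference set with the right parameters that happens to equal a shift: I must be sure the argument only uses the intrinsic condition $p^u(D_f+c)\equiv D_f+c$, not any uniqueness of difference sets with parameters $(\ell,q,\lambda)$ — the Remark after Proposition-Definition \ref{prop:frobenius_auto} warns that such uniqueness can fail. Keeping the proof at the level of the $\ZZ{\ell}$-arithmetic of the fixed-point condition $p^u c \equiv c \mod \ell$ sidesteps that issue entirely, and the Singer dictionary then transports the conclusion back to the geometric statement about fixed hyperplanes.
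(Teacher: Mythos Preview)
Your approach is correct and in substance the same as the paper's: both rest on Singer's dictionary identifying the point-index set on $h_c$ with the shift $D_f+c$, so that $\Phi_f$-fixed hyperplanes correspond exactly to $\Phi_f$-invariant shifts of $D_f$. The paper packages the biconditional as two short contradiction arguments and then states the count separately, whereas you set up a single explicit bijection $c\mapsto D_f+c$ between fixed hyperplanes and Frobenius shifts and read off both the equivalence and the counting formula at once; this is a tidier organisation of the same idea.

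One small point to tighten: in your converse step you pass from $p^u(D_f+c)\equiv D_f+c$ and $p^u D_f\equiv D_f$ to $p^u c\equiv c\pmod{\ell}$, which requires that $D_f$ have no nontrivial translation period in $\ZZ{\ell}$. You assert this but do not justify it. It follows either arithmetically (summing the elements of $D_f+s=D_f$ gives $qs\equiv 0\pmod{\ell}$, and $\gcd(q,\ell)=1$ since $\ell-nq=1$) or geometrically (distinct hyperplanes have distinct point sets, so distinct $c$ give distinct $D_f+c$). The paper sidesteps this by arguing on the hyperplane side rather than reducing to the congruence on $c$, but your arithmetic route is equally valid once this lemma is stated.
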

\begin{proof}
Let us assume that no shifts of $D_f$ are allowed but there is at least
another hyperplane $h_s$ fixed by $\Phi_f$.
This would mean that $h_0$ and $h_s$ share the same point set ${\cal P}_0$. 
Nevertheless, sharing the same point set means $h_0=h_s$.\par
%\item
Now we assume that only the hyperplane $h_0$ is fixed by $\Phi_f$, but 
there are two difference sets fixed:
the Frobenius difference set $D_f$ and a shift of it $D'_f \equiv (D_f + s) \mod \ell$. 
Both $D_f$ and $D_f'$ are defined  
up to multiplication with elements $t \in (\ZZ{\ell})^*$. According to Singer's construction 
if we identify the points on $h_0$ with the elements of $D_f$, then we may identify
the points on $h_s$ with the elements of $D'_f \equiv (D_f + s) \mod \ell$. As $D'_f$ is fixed under 
the action of $\Phi_f$ so should also $h_s$ be, but this would be a contradiction to the fact that $h_0$ is unique. 
It thus follows that no shifts of $D_f$ are allowed.\par
%\end{enumerate}
%\vspace{1ex}
As we have already seen, the indices of
the points on every hyperplane $h_s$, \mbox{$s \in \ZZ{\ell}$} can be identified with 
the elements of a shift $D_f + s$.
If $\Phi_f$ fixes some of the hyperplanes $h_s$, it directly follows
that the indices of the points on each fixed $h_s$ describe Frobenius difference sets $D_f + s$ and we have:
\[
\# \textrm{Frobenius difference sets as shifts of } D_f = 
\# \textrm{ fixed hyperplanes (fixed points) of } \Phi_f \, .
\]
\end{proof}
\begin{remark}
\label{remark:frobenius_dset_shifts}
From the proof of Proposition \ref{prop:frobenius_dset_unicity} follows that, 
once we know a Frobenius difference set $D_f$, it is easy to determine 
other equivalent difference sets fixed by $\Phi_f$.
We only need to know which of the hyperplanes $h_s$ is fixed by $\Phi_f$.
The corresponding difference set is then $D_f+s$.
\end{remark}
\begin{example}
For the projective space $\PR{4}{2}$ there are only one hyperplane
$h_0$ and one point $P_0$ fixed under the action of $\Phi_5$. It follows that the
Frobenius difference set 
\[
D_5=\lbrace 1,3,15,2,6,30,4,12,29,8,24,27,16,17,23 \rbrace \mod 31
\]
is unique up to multiplication with elements $t \in (\ZZ{31})^*$.
\end{example}
\begin{example}
For the projective space $\PR{3}{3}$ there are two hyperplanes and, by duality, two points
fixed by $\Phi_4$. The fixed hyperplanes are $h_0$ and $h_{20}$
and the fixed points are $P_{0}$ and $P_{20}$. As we have seen in Example
\ref{example_P3_F3_frobenius_difference_sets}, we may choose
\[
D_4=\lbrace 21, 22, 23, 25, 26, 29, 34, 35, 38, 0, 5, 7, 15  \rbrace \mod 40
\]
as the Frobenius difference set corresponding to 
the set of points on $h_{0}$. We, therefore, have that
\[ 
D'_4\equiv D_4+20 \mod 40=\lbrace 1,2,3,5,6,9,14,15,18,20,25,27,35 \rbrace \mod 40
\]
is the Frobenius difference set corresponding to the set of points on 
$h_{20}$ and it is easy to prove that 
$3^u (D_4 + 20)\equiv(D_4 + 20) \mod 40$ for all $u \in (\ZZ{4})$.
%up to element permutations.
\end{example}
%
%-------SECTION-----------------
\subsection{Frobenius compatibility}\label{subsec_Frob_compat}

The existence of Frobenius difference sets
is a necessary but still not a sufficient condition
for the construction of dessins having
the Frobenius automorphism as a dessin automorphism. We consider the following
example:
\begin{example}\label{example_P_F2_2}
In addition to the dessins $\cal D$ and ${\cal D}'$ we have considered in Example 
\ref{example_P_F2_1}
we construct a third dessin ${\cal D}''$ associated with the projective space
$\PR{4}{2}$. For the construction we use
the difference set $D''$ whose elements are the elements of
$D$ but with a different ordering\footnote{We use here again the notation $D$ for the difference set 
$D_5$ in order to be consistent with the notation used in Example \ref{example_P_F2_1}.} 
:
\begin{eqnarray*}
D=\{1,3,15,2,6,30,4,12,29,8,24,27,16,17,23 \}\mod 31 \, ,\\[2ex]
D'' =\{1,2,3,4,6,8,12,15,16,17,23,24,27,29,30\}\mod 31 \, .
\end{eqnarray*}
According to the incidence pattern given in Figure \ref{fig_local}
we may construct the Wada dessin ${\cal D}''$ sketched
in Figure \ref{fig_dessinsP4_F2_3rddess}.
%-------FIGURE----------
\begin{figure}
\includegraphics[width=.5\linewidth]{P4F2_Incidence_D1.png}
\includegraphics[width=.5\linewidth]{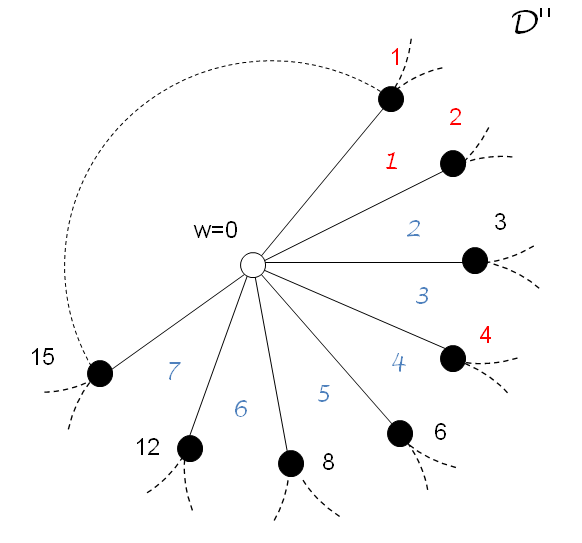}
\caption{\textit{Sketch of dessins $\cal D$ and ${\cal D}''$
associated with $\PR{4}{2}$. As in Figure \ref{fig_dessinsP4_F2}
we emphasize in red the action of the Frobenius automorphism on
the cells (see text for more details).}}
\label{fig_dessinsP4_F2_3rddess}
\end{figure}
%------END FIGURE------
In Example \ref{example_P_F2_1} we have seen that $\sigma$ is an automorphism
of $\cal D$ acting with a rotation of the cells around the white vertex
$w=0$. However, for ${\cal D}''$ we observe that it is not an automorphism.
The different behaviour of
the Frobenius automorphism on the dessins ${\cal D}$ and ${\cal D}''$
depends on the ordering of the elements of the difference sets.
We consider orbits of elements under the action of the cyclic group $\Phi_5$ generated by
$\sigma$. As we have seen in Example \ref{example_orbits_P4_F2}
the elements of $D$ are subdivided into three orbits of length five: 
\[
\{1,2,4,8,16\} \, , \quad \{3,6,12,24,17\} \, , \quad \{15,30,29,27,23\} \, .
\]
Thus, we are able to construct five blocks: 
the first block only contains the first elements of each orbit, 
the second block the second elements of each orbit, etc.
The Frobenius automorphism maps the elements
of each block onto the elements of the next block.
The element ordering of $D$ corresponds to this subdivision
into blocks, the element ordering of $D''$ does not.
\end{example}
We call the ordering of the elements of $D$ given in the above example \textbf{Frobenius compatible}.\par
According to the action of $\Phi_f$ on the elements of a Frobenius difference set $D_f$ 
(see Corollary \ref{coro_Cf_action_FrobAuto}), we give the 
following definition:
\begin{defi}\label{defi_frob_compat}
Consider the following cyclic orderings of the $q$ elements of a Frobenius difference set $D_f$:
\begin{align}\label{eq_frob_compat}
D_f=&\{d_1, \cdots , d_{k},\; p^j d_1, \cdots, p^j d_{k},\;\cdots\cdots \cdots,
		p^{(f-1)j}d_1 , \cdots, p^{(f-1)j} d_{k}\}\; ,\nonumber\\
		 &\textrm{ for some } j \in \left(\mathbb{Z}\slash f \mathbb{Z}\right)^*, \;\frac{q}{f}=k\; .
\end{align}
We call such cyclic orderings {\bf Frobenius compatible} orderings.
\end{defi}
Since Wada compatibility is also necessary for the construction of Wada dessins
(see Definition \ref{defi_wada}) we formulate moreover
\begin{proposition}\label{prop_wadaord}
The elements of a Frobenius difference set $D_f$ ordered in a Frobenius 
compatible way are also ordered in a Wada compatible way 
iff differences of consecutive elements belonging to 
the subset $\{d_1, \cdots , d_{k}\}$ are prime to $\ell$
\begin{equation}\label{eq_difffrob_0}
 gcd \left(d_{i}- d_{i+1}, \ell\right)=1 \quad \forall i \in \{1, \cdots, (k-1)\}
\end{equation}
and iff
\begin{equation}\label{eq_difffrob_1}
gcd \left(d_{k}-p^{j}d_1,\ell\right)=1 \, .
\end{equation}
\end{proposition}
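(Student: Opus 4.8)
The plan is to spell out exactly which consecutive differences occur in a Frobenius compatible cyclic ordering and then check Wada compatibility term by term. Recall from Definition \ref{defi_frob_compat} that in a Frobenius compatible ordering the $q=fk$ elements of $D_f$ are listed as the concatenation of $f$ blocks
\[
B_0=\{d_1,\dots,d_k\},\quad B_1=\{p^jd_1,\dots,p^jd_k\},\quad\dots,\quad B_{f-1}=\{p^{(f-1)j}d_1,\dots,p^{(f-1)j}d_k\}\, ,
\]
so that the element following $d_i$ is either $d_{i+1}$ (when $1\le i\le k-1$, same block) or $p^jd_1$ (when $i=k$, crossing from block $B_t$ to block $B_{t+1}$), and the cyclic ordering closes up because $p^{fj}d_1\equiv d_1\bmod\ell$ by Corollary \ref{coro_Cf_action_FrobAuto}.

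First I would list the two types of consecutive differences. Within block $B_t$ the differences are $p^{tj}d_i-p^{tj}d_{i+1}=p^{tj}(d_i-d_{i+1})$ for $i=1,\dots,k-1$ and $t=0,\dots,f-1$; between consecutive blocks $B_t$ and $B_{t+1}$ the difference is $p^{tj}d_k-p^{(t+1)j}d_1=p^{tj}(d_k-p^jd_1)$ for $t=0,\dots,f-1$ (with $t=f-1$ giving the wrap-around difference $p^{(f-1)j}d_k - d_1 = p^{(f-1)j}(d_k - p^j d_1)$ using $p^{fj}d_1\equiv d_1$). Thus \emph{every} consecutive difference in the ordering has the form $p^{tj}\cdot\delta$ where $\delta$ is one of the $k-1$ ``internal'' differences $d_i-d_{i+1}$ or the single ``junction'' difference $d_k-p^jd_1$.

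Next I would use the elementary fact that $p$ is a unit modulo $\ell$. Indeed $\ell=\dfrac{n^{m+1}-1}{n-1}$ divides $n^{m+1}-1$, and $n=p^e$, so $\gcd(p,\ell)=1$; hence every power $p^{tj}$ is invertible in $\ZZ{\ell}$. Multiplication by a unit does not change the gcd with $\ell$, so $\gcd(p^{tj}\delta,\ell)=\gcd(\delta,\ell)$ for every $t$. Consequently the ordering is Wada compatible, i.e. all $q$ consecutive differences are prime to $\ell$, if and only if each of the $k-1$ internal differences $d_i-d_{i+1}$ is prime to $\ell$ and the junction difference $d_k-p^jd_1$ is prime to $\ell$ --- which is precisely conditions \eqref{eq_difffrob_0} and \eqref{eq_difffrob_1}.

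The ``hard part'' here is really only bookkeeping: one must be careful that the $f$ shifted copies of each internal difference, together with the $f$ junction differences, exhaust \emph{all} $q$ cyclic gaps and that no further independent conditions are hidden. The proof is otherwise immediate once the observation $\gcd(p,\ell)=1$ is in place; there is no genuine obstacle. I would present it by first fixing notation for the blocks, then writing the two displayed families of consecutive differences, then invoking invertibility of $p^{tj}$ to reduce to the $k$ conditions in the statement.
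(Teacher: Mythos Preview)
Your proof is correct and follows essentially the same approach as the paper's: both identify the two families of consecutive differences (within-block and between-block), observe that each is a $p$-power multiple of one of the $k$ ``base'' differences in \eqref{eq_difffrob_0}--\eqref{eq_difffrob_1}, and invoke $\gcd(p,\ell)=1$ to conclude. Your write-up is somewhat more explicit about the wrap-around term and the exhaustive bookkeeping, but the argument is the same.
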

\begin{proof}
If the elements of $D_f$ ordered in a Frobenius compatible way also satisfy the Wada condition,
then conditions (\ref{eq_difffrob_0}) and (\ref{eq_difffrob_1}) necessarily hold.\par
If condition (\ref{eq_difffrob_0}) holds then we also have
\begin{equation}\label{eq_difffrob_0_general}
gcd\left(p^j(d_{i}- d_{i+1}), \ell\right)=1 \quad \forall j \in \left(\ZZ{f}\right)^* \; .
\end{equation}
In fact, since $\ell=\frac{n^{m+1}-1}{n-1}=n^m+n^{m-1}+\cdots + 1$ with $n=p^e$ 
we have $gcd(\ell, p) = 1$.
This means that all differences of consecutive elements belonging to the subset 
$\{p^j d_1 , \cdots , p^j d_{k}\}$ are prime to $\ell$.
Of course, this may be extended to each other subset
$\{p^{h\cdot j} d_1 , \cdots , p^{h\cdot j} d_{k}\}$, $h \in \ZZ{f}$.\par
Condition (\ref{eq_difffrob_1}) is necessary to make sure that when passing from one subset to the next
the Wada condition is also satisfied. If
(\ref{eq_difffrob_1}) holds then for the same reasons as in
(\ref{eq_difffrob_0_general}) we also have:
\begin{eqnarray*}
gcd((p^{h\cdot j}d_{k} - p^{(h+1)\cdot j}d_1), \ell)=1 \quad \forall h \in \ZZ{f} \, .
\end{eqnarray*}
\end{proof}
We proceed considering Wada dessins constructed with Frobenius difference sets whose 
elements are ordered in a Frobenius compatible way. 
For these dessins the cyclic group $\Phi_f$ is a group of automorphisms.
We examine this fact more in detail in the next sections.
%
%--------SECTION---------
\section{A 'nice' case}\label{sec_nice_case}

In Section \ref{section_Frob_diff_set} we have seen that for each projective 
space $\PR{m}{n}$ there always exists at least one Frobenius difference set $D_f$. Unfortunately,
we cannot always order its elements in a Frobenius compatible way. In fact, it may happen
that the cyclic group $\Phi_f$ divides the elements of $D_f$ into orbits with different lenghts. 
In Example \ref{example_P3_F3_frobenius_difference_sets}
we have seen that the cyclic group $\Phi_4$ divides the elements of the
difference set $D_4$ associated with $\PR{3}{3}$ into orbits with lenghts four, two
and one.
In general, it is not easy to determine the necessary conditions for the parameters $m$
and $n$ of $\PR{m}{n}$ so that we can predict the existence of
Frobenius compatible orderings. Here we consider a 'nice' case and we see that if
the order of the cyclic group $\Phi_f$ is prime, 
under some conditions, Frobenius compatible orderings exist.\par
We first formulate two lemmas:
\begin{lemma}\label{lemma_fdivq}
Let $f$ be the order of the cyclic group $\Phi_f\cong Gal(\F{{n^{m+1}}} \slash \F{p})$ 
generated by the Frobenius automorphism acting on a projective space $\PRmn$ 
with \mbox{$n=p^e, e \in \mathbb{N}\backslash\{0\}$}. Let $f$ be prime, thus we have:
\begin{eqnarray}
n=p \; , \quad f=m+1 \; , \quad \Phi_f\cong Gal(\F{{p^{m+1}}} \slash \F{p}) \; .
\end{eqnarray}
If $p \neq m+1$ and $p\not\equiv 1 \mod (m+1)$, 
then $f$ divides the valency $q$ of the white and of the black vertices.
\end{lemma}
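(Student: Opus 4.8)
The plan is to reduce the divisibility $f\mid q$ to a one-line congruence and then dispatch it with Fermat's little theorem, using the two hypotheses for two different purposes. I would start from the reduction that is already part of the statement: since $f=e\cdot(m+1)$ with $m+1\ge 2$, primality of $f$ forces $e=1$, hence $n=p$ and $f=m+1$, so I may work with these identifications from the outset. Under them the white/black valency is
\[
q=\frac{n^{m}-1}{n-1}=\frac{p^{m}-1}{p-1}=1+p+p^{2}+\cdots+p^{m-1}=\frac{p^{f-1}-1}{p-1},
\]
using $m=f-1$. So the claim becomes: $m+1$ divides $1+p+\cdots+p^{m-1}=\frac{p^{f-1}-1}{p-1}$.

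Next I would use the hypothesis $p\neq m+1$. Together with the primality of both $p$ and $f=m+1$ this gives $\gcd(p,f)=1$, so Fermat's little theorem applies and yields $p^{f-1}\equiv 1 \pmod{f}$, i.e. $f\mid p^{f-1}-1=(p-1)\,q$. Then I would use the hypothesis $p\not\equiv 1 \pmod{m+1}$: it says $f\nmid p-1$, and since $f$ is prime this is equivalent to $\gcd(f,p-1)=1$. Combining, $f\mid (p-1)q$ together with $\gcd(f,p-1)=1$ forces $f\mid q$, which is exactly the assertion.

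The argument is entirely elementary, so there is no real obstacle; the only point that requires care is bookkeeping of which hypothesis does what. The condition $p\neq m+1$ is used precisely to make $p$ a unit modulo $f$ (so Fermat is available), and the condition $p\not\equiv 1\pmod{m+1}$ is used precisely to cancel the factor $p-1$. Both are genuinely needed: if $p\equiv 1\pmod{m+1}$ then $q=1+p+\cdots+p^{m-1}\equiv m\equiv -1\pmod{m+1}$, so $f\nmid q$ and the lemma fails. (One may also remark, for the write-up, that the coprimality $\gcd(\ell,p)=1$ already noted in the proof of Proposition~\ref{prop_wadaord} is of the same flavour, but it is not needed here.)
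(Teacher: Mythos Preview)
Your proof is correct and follows essentially the same route as the paper's: Fermat's little theorem (using $p\neq m+1$) gives $f\mid p^{m}-1=(p-1)q$, and then the hypothesis $p\not\equiv 1\pmod{m+1}$ together with primality of $f$ gives $\gcd(f,p-1)=1$, whence $f\mid q$. The paper phrases the second step slightly more informally (``the denominator should not destroy the divisibility'') and also records the contrapositive computation $q\equiv m\pmod{m+1}$ when $p\equiv 1$, which you include as well.
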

\begin{proof}
The proof follows from Fermat's little theorem.\par
For $n=p$ the integer $q$ is given by (recall Relation
(\ref{eq_integer_q})):
\[
q=\frac{p^m - 1}{p -1} \, .
\]
The integer $f=m+1$ divides $q$ if
\[
p^m \equiv 1 \mod (m+1) \, ,
\]
and this is true due to Fermat's little theorem since we have chosen $p\neq m+1$. 
Now we need the denominator not to 'destroy' the divisibility property. 
In fact, we have to choose
\[
p \not\equiv 1 \mod (m+1)\, ,
\]
since for $p \equiv 1 \mod (m+1)$ we obtain:
\[
q=p^{m-1}+ \cdots + 1 \equiv m \mod (m+1)
\]
and $gcd(m,(m+1))=1$.
\end{proof} 
\begin{lemma}\label{lemma_fdiffset}
We consider the Frobenius difference set $D_f$ fixed by the cyclic group $\Phi_f$.
Under the conditions of Lemma \ref{lemma_fdivq} on $f$ and $n$,
no shifts of $D_f$ are fixed by $\Phi_f$ and 
the following properties hold:
\begin{enumerate}
\item 
$0 \not\in D_f$ and
\item
all $\Phi_f$-orbits of elements $d_i \in D_f$ have length $f$.
\end{enumerate}
\end{lemma}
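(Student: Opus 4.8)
The plan is to work with the identification $\PR{m}{n}\cong\F{{p^{m+1}}}^*/\F{p}^*$ from Lemma \ref{lemma_fdivq}, where $n=p$ and $f=m+1$ is prime. Recall from the proof of Proposition-Definition \ref{prop:frobenius_auto} that $D_f$ consists of the indices of the $q$ points lying on the fixed hyperplane $h_0$, and that $\Phi_f$ acts on $D_f$ by multiplication by powers of $p$. By Corollary \ref{coro_Frobenius_orbits} these $\Phi_f$-orbits have lengths $\varphi\mid f$; since $f$ is prime, each orbit has length $1$ or $f$. So the entire content of the lemma is: (a) there is no shift of $D_f$ fixed by $\Phi_f$; (b) no element of $D_f$ is fixed by $\Phi_f$ other than possibly $0$, and moreover $0\notin D_f$.

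First I would handle the fixed elements. An element $d\in\ZZ{\ell}$ is fixed by $\Phi_f$ iff $p\,d\equiv d\bmod\ell$, i.e. $(p-1)d\equiv 0\bmod\ell$. Here $\ell=\frac{p^{m+1}-1}{p-1}=p^m+p^{m-1}+\cdots+1$. The key arithmetic fact is $\gcd(p-1,\ell)$: since $\ell\equiv m+1\bmod(p-1)$ (each of the $m+1$ terms $p^i$ is $\equiv 1$), and by Fermat's little theorem applied as in Lemma \ref{lemma_fdivq} together with the hypothesis $p\not\equiv 1\bmod(m+1)$, one gets $\gcd(p-1,m+1)=1$ hence $\gcd(p-1,\ell)=\gcd(m+1,\ell)$; I would then show $\gcd(m+1,\ell)=1$ as well, again because $\ell\equiv m\bmod(m+1)$ when... — more carefully, the cleanest route is: the fixed points of $\Phi_f=\mathrm{Gal}(\F{p^{m+1}}/\F{p})$ acting on $\PR{m}{p}$ are exactly the $\F{p}$-rational points, i.e. $\PR{m-1}{p}$-many... no: the fixed points of Frobenius on $\F{p^{m+1}}^*/\F{p}^*$ correspond to those $g^b$ with $g^{bp}\in g^b\F{p}^*$, which form a subgroup whose order is $\gcd(\ell,\,\frac{p^{\gcd(m+1,?)}-1}{p-1})$-type quantity; under $p\ne m+1$ and $p\not\equiv 1\bmod(m+1)$ this subgroup is trivial, so $P_0$ (equivalently $h_0$) is the unique fixed point. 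This immediately gives both that $0\notin D_f$ (if $0\in D_f$ then $P_0\in h_0$, but a cleaner contradiction: $0$ would be a length-$1$ orbit, contradicting uniqueness combined with the count below) and that every $\Phi_f$-orbit inside $D_f$ has length exactly $f$, proving item 2. Item 1 then follows from Proposition \ref{prop:frobenius_dset_unicity}: the number of Frobenius difference sets arising as shifts of $D_f$ equals the number of fixed hyperplanes of $\Phi_f$, which is $1$, so no nontrivial shift is fixed.

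For the claim $0\notin D_f$ I would also give the direct counting argument, which is robust and self-contained: $|D_f|=q=\frac{p^m-1}{p-1}$, and by Lemma \ref{lemma_fdivq} we have $f\mid q$ under the stated hypotheses. If all $\Phi_f$-orbits in $D_f$ have length $f$ we are done with item 2 and then $0$, being a fixed (length-$1$) element, cannot lie in $D_f$; conversely if some length-$1$ orbit $\{d\}\subseteq D_f$ existed then $d$ is a Frobenius-fixed element, hence $d=0$ by uniqueness of the fixed point, and then $D_f\setminus\{0\}$ is a union of length-$f$ orbits so $f\mid q-1$, contradicting $f\mid q$ (as $f>1$). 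Hence no length-$1$ orbit occurs, $0\notin D_f$, and all orbits have length $f$.

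\textbf{Main obstacle.} The one genuinely delicate point is pinning down that $\Phi_f$ has a \emph{unique} fixed point on $\PR{m}{p}$ under the hypotheses $p\ne m+1$, $p\not\equiv 1\bmod(m+1)$ — i.e. that the only $d$ with $(p-1)d\equiv 0\bmod\ell$ is $d=0$. This reduces to the number-theoretic statement $\gcd(p-1,\ell)=1$, and the heart of it is the same Fermat-type computation used in Lemma \ref{lemma_fdivq}; I expect this to require the most care, whereas the orbit-length bookkeeping and the appeal to Proposition \ref{prop:frobenius_dset_unicity} for item 1 are then routine. Fortunately the counting argument in the previous paragraph gives an alternative that only needs $f\mid q$ (already established) plus the bare fact that a length-$1$ orbit would force $d=0$, so even a partial resolution of the $\gcd$ question suffices to close the proof.
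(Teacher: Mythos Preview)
Your proposal is correct and follows essentially the same route as the paper: the paper also reduces everything to showing $\gcd(p-1,\ell)=1$ (via the Euclidean step $\ell\equiv m+1\bmod(p-1)$ together with $m+1$ prime and $p\not\equiv 1\bmod(m+1)$), deduces that $0$ is the only $\Phi_f$-fixed residue, and then uses exactly your counting argument ($f\mid q$ forbids a single length-$1$ orbit) to get $0\notin D_f$ and all orbits of length $f$. The only cosmetic difference is that the paper derives the ``no shifts'' claim directly from $\gcd(p-1,\ell)=1$ rather than quoting Proposition~\ref{prop:frobenius_dset_unicity}, and your momentary slip writing $\gcd(p-1,\ell)=\gcd(m+1,\ell)$ should read $\gcd(p-1,\ell)=\gcd(p-1,m+1)$, which is the computation you were reaching for.
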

\begin{proof}
As we have seen in Corollary \ref{coro_Cf_action_FrobAuto}, the group 
$\Phi_f$ acts on the elements of $D_f$ by
multiplication with powers of $p$ and it divides the elements into orbits whose lengths are divisors of $f$. 
Since $f$ is prime, 
we will only have orbits of length $1$ or of length $f$. Suppose we have at least one orbit of length 1.
Since $f$ divides $q$ (see Lemma \ref{lemma_fdivq} above), we necessarily have at least $f-1$ other
orbits of length $1$. 
Let $0 \in D_f$ and let $\{0\}$ be an orbit of length one.\par
Recalling some ideas of Baumert \cite{Baumert71} about projective planes, we first show that $gcd(p-1,\ell)=1$.\par 
Dividing $\ell$ by $p-1$ we obtain:
\[
\ell=(p-1)(p^{m-1}+ \cdots + (m-1)p + m) +(m+1)\, .
\]
We know
\[
gcd(p-1,m+1)=1
\]
due to $p\not\equiv 1 \mod (m+1)$ and to $m+1$ being prime. 
From the Euclidean algorithm it therefore follows that
\[
gcd(p-1,\ell)=1 \, .
\]
Let us suppose that there exists another Frobenius difference set
$D_f' \equiv (D_f + s) \mod \ell$ fixed by $\Phi_f$. For the integer $s \in \ZZ{\ell}$ 
we have:
\begin{equation}\label{eq_FrobDiff_unique}
p\cdot s \equiv s \mod \ell \quad \textrm{ i.e. } \quad (p-1)\cdot s \equiv 0 \mod \ell \, .
\end{equation}
Since $gcd(p-1,\ell)=1$, equation (\ref{eq_FrobDiff_unique}) can be satisfied
only for $s \equiv 0 \mod \ell$. Thus no shifts of
$D_f$ are allowed.\par
Now, if $(f-1)$ elements $d_i \in D_f$ with $d_i \not\equiv 0 \mod \ell$ are fixed by $p$, then they 
should satisfy the congruence relation:
\[
p\cdot d_i \equiv d_i \mod \ell \; \Longrightarrow \; (p-1)\cdot d_i \equiv 0 \mod \ell \, .
\]
Again, since $gcd(p-1,\ell)=1$, the above congruence can be satisfied only for 
\mbox{$d_i \equiv 0 \mod \ell$}.
This means that $\{0\}$ is the only possible orbit of length 1 under the action of $p$. 
Nevertheless, we must exclude it since in this case $f$ would not divide $q$. \par
It follows that $0 \not\in D_f$ and that $D_f$ is decomposed only into $\Phi_f$-orbits of length $f$.
\end{proof}
Thanks to the above lemma we may choose cyclic orderings of 
the elements of $D_f$ as we have defined them in \ref{defi_frob_compat}. We have
\begin{coro}
Under the conditions of Lemma \ref{lemma_fdivq}, the elements of a Frobenius difference set 
$D_f$ fixed by the cyclic group $\Phi_f$ may be ordered in a Frobenius compatible way. This ordering 
is fixed under the action of $\Phi_f$ up to cyclic permutations.
\end{coro}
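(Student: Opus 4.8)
The plan is to read the required ordering straight off the orbit decomposition supplied by Lemmas \ref{lemma_fdivq} and \ref{lemma_fdiffset}. Since $f$ is prime and the hypotheses $p\neq m+1$, $p\not\equiv 1\bmod(m+1)$ hold, Lemma \ref{lemma_fdivq} gives $f\mid q$, so $k:=q/f$ is a positive integer. By Lemma \ref{lemma_fdiffset} every $\Phi_f$-orbit of an element of $D_f$ has length exactly $f$; hence $D_f$ is partitioned into precisely $k$ such orbits. By Corollary \ref{coro_Cf_action_FrobAuto} the group $\Phi_f$ is generated by multiplication by $p$, so the orbit of any $d\in D_f$ is $\{d,pd,p^2d,\dots,p^{f-1}d\}$, these $f$ residues being pairwise distinct modulo $\ell$. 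I would then choose one representative $d_1,\dots,d_k$, one from each of the $k$ orbits, and fix any unit $j\in(\ZZ{f})^*$ (for instance $j=1$, which is legitimate since $f$ is prime).

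Next I would form the list obtained by concatenating the $f$ blocks $B_h:=(p^{hj}d_1,\dots,p^{hj}d_k)$ for $h=0,1,\dots,f-1$. Each $d\in D_f$ lies in exactly one orbit, say the one represented by $d_i$, and then equals $p^{h}d_i$ for a unique $h\in\{0,\dots,f-1\}$; thus $d$ occurs exactly once among the entries of the $B_h$. Consequently the concatenation
\[
\bigl(d_1,\dots,d_k,\;p^jd_1,\dots,p^jd_k,\;\dots\dots,\;p^{(f-1)j}d_1,\dots,p^{(f-1)j}d_k\bigr)
\]
is an enumeration of all $q=kf$ elements of $D_f$ with no repetitions, i.e.\ precisely a cyclic ordering of the shape demanded in Definition \ref{defi_frob_compat}. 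This establishes the first assertion.

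For the second assertion I would apply the generator $\sigma$, i.e.\ multiplication by $p$. It sends the $i$-th entry $p^{hj}d_i$ of block $B_h$ to $p^{hj+1}d_i$. Writing $j^{-1}$ for the inverse of $j$ modulo $f$ (which exists since $f$ is prime), one has $p^{hj+1}d_i=p^{(h+j^{-1})j}d_i$, which is exactly the $i$-th entry of block $B_{h+j^{-1}\bmod f}$. Hence $\sigma$ carries block $B_h$ onto block $B_{h+j^{-1}}$ slot by slot, and therefore permutes the whole list by the cyclic rotation through $k\,j^{-1}$ positions (the list having length $q=kf$); for $j=1$ this is simply the rotation by $k$ positions. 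Since $\Phi_f$ is generated by $\sigma$, every element of $\Phi_f$ acts on the chosen Frobenius compatible ordering as a cyclic permutation, which is the claim.

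I do not expect a genuine obstacle here: the substance is already contained in Lemmas \ref{lemma_fdivq} and \ref{lemma_fdiffset}, and what remains is the index bookkeeping of the last paragraph — verifying that multiplication by $p$ shifts the blocks $B_h$ cyclically rather than scrambling them, which relies only on $j$ being invertible modulo $f$, guaranteed by the primality of $f$. The one point to state carefully is that the concatenation genuinely lists $D_f$ without repetition, which uses both that the orbits partition $D_f$ and that each has full length $f$, together with $f\mid q$.
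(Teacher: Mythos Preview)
Your proposal is correct and follows exactly the approach the paper intends: the corollary is stated without proof, as an immediate consequence of Lemma~\ref{lemma_fdiffset} (all $\Phi_f$-orbits on $D_f$ have length $f$) together with $f\mid q$ from Lemma~\ref{lemma_fdivq}. Your argument simply spells out the bookkeeping the paper leaves implicit --- choosing orbit representatives, concatenating the blocks $B_h$, and checking that multiplication by $p$ shifts the blocks cyclically --- and this is precisely what is needed.
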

Using the Frobenius difference set $D_f$ with Frobenius compatible element orderings 
we construct dessins associated with projective spaces.
If the dessins have the Wada property, then the cyclic group $\Phi_f$ 
is a group of automorphisms.
%
%------------------FIGURE-------------------
\begin{figure}[!ht]
	\includegraphics[width=.5\linewidth]{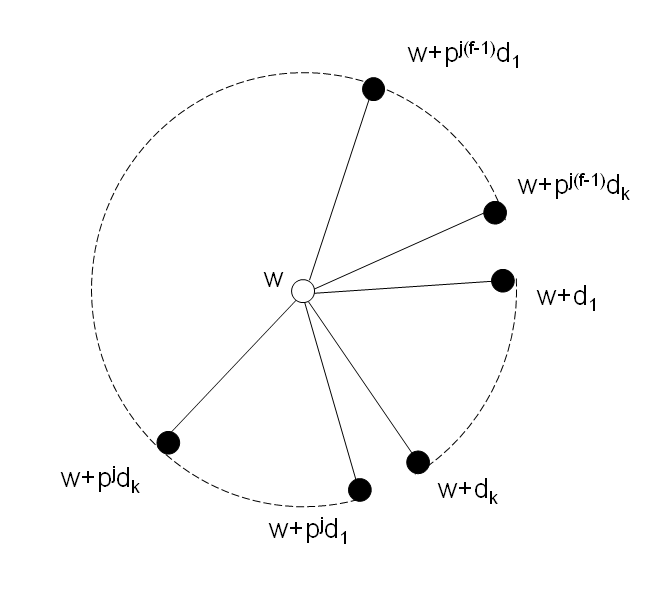}
	\includegraphics[width=.5\linewidth]{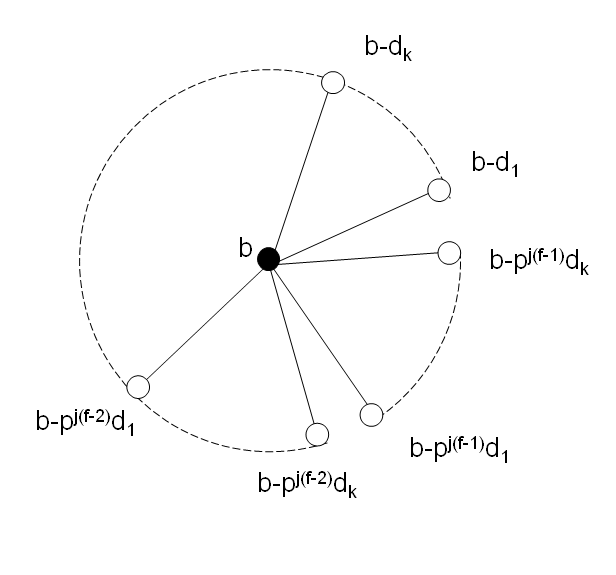}
	\caption{\textit{Local incidence patterns with Frobenius compatible
ordering of the elements of $D_f$.}}
\label{fig_FrobCompPattern}
\end{figure}
%--------------END FIGURE----------------
%
\begin{proposition}\label{prop_frobauto}
Let $\Phi_f$ be the cyclic group generated by the Frobenius automorphism
acting on a finite projective space $\PRmn$, $n=p^e$. 
Let $f$ be prime and $p \neq m+1$, $p \not\equiv 1 \mod (m+1)$. Let $D_f$ be a 
Frobenius difference set fixed by 
$\Phi_f$ whose elements are ordered in a Frobenius compatible way. If the elements
of $D_f$ are also ordered in a Wada compatible way so that we can construct
a $(q,q,\ell)$-Wada dessin $\cal D$, then $\Phi_f$ is a group of automorphisms
of $\cal D$ acting freely on the edges and rotating the set of cells 
around the vertices $b=w=0$ fixed by $\Phi_f$.
\end{proposition}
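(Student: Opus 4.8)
The plan is to show that the map $\sigma$ defined on vertex indices by $b \mapsto bp$, $w\mapsto wp$ (the Frobenius action of Relation~(\ref{eq_action_Cf_indices}), written multiplicatively mod $\ell$) is an orientation-preserving automorphism of $\cal D$, and then to analyse its action on edges and cells. First I would recall that under the hypotheses of Lemma~\ref{lemma_fdivq} and Lemma~\ref{lemma_fdiffset}, the Frobenius difference set $D_f$ contains $q = fk$ elements, falls into exactly $k$ orbits of full length $f$ under multiplication by $p$, does not contain $0$, and admits a Frobenius compatible cyclic ordering $D_f = \{d_1,\dots,d_k,\, p^j d_1,\dots, p^j d_k,\, \dots,\, p^{(f-1)j}d_1,\dots,p^{(f-1)j}d_k\}$ which by the Corollary preceding this proposition is unique up to cyclic permutation. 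The Wada compatibility hypothesis guarantees (via Proposition~\ref{prop_wadaord}) that all differences of consecutive elements in this ordering are prime to $\ell$, so every cell has valency $2\ell$ and the Wada property holds; in particular, on each cell boundary every index occurs exactly once for black and once for white vertices.

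**$\sigma$ preserves incidence and the local pattern.** The key point is that $\sigma$ respects the combinatorial data defining the embedding. An edge $(w, w+d_i)$ of type $\circ$---$\bullet$ is sent by $\sigma$ to $(wp, wp + d_i p)$; since $p\cdot D_f \equiv D_f \bmod \ell$ (Corollary~\ref{coro_Cf_action_FrobAuto}), the element $d_i p$ is again an element of $D_f$, so $\sigma$ sends edges to edges and hence permutes the $q\cdot\ell$ edges of each type. Moreover, because the Frobenius compatible ordering has the property that multiplication by $p^j$ shifts block $t$ to block $t+1$ cyclically, multiplication by $p$ permutes the elements $d_i$ in a way compatible with the chosen cyclic ordering: concretely, writing the ordering indices so that $d_{i+k}= p^j d_i$, one checks that $p\cdot d_i$ occupies a position in the ordering obtained from that of $d_i$ by a fixed shift (independent of $i$), because $p$ and $p^j$ generate the same cyclic group of order $f$ acting on the $k$ orbit representatives. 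This is exactly the statement that $\sigma$ carries the local incidence pattern of Figure~\ref{fig_FrobCompPattern} at a vertex $v$ to the same pattern at the vertex $vp$ — the anticlockwise (resp.\ clockwise) order of incident edges is preserved — so $\sigma$ extends to an orientation-preserving homeomorphism of the surface carrying $\cal D$ to itself, i.e.\ an automorphism of the dessin.

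**Freeness on edges.** Next I would show $\Phi_f$ acts freely on edges. Suppose $\sigma^u$ fixes an edge $(w, w+d_i)$; then $wp^u \equiv w$ and $(w+d_i)p^u \equiv w+d_i \bmod \ell$, so $w(p^u-1)\equiv 0$ and $d_i(p^u-1)\equiv 0 \bmod \ell$. If $u \not\equiv 0 \bmod f$, then since $f$ is prime $p^u$ is again a generator and the argument in the proof of Lemma~\ref{lemma_fdiffset} applies: from $\gcd(p-1,\ell)=1$ one deduces more generally $\gcd(p^u-1,\ell)=1$ for $u\not\equiv 0\bmod f$ (the multiplicative order of $p$ mod $\ell$ is exactly $f$, and any proper power $p^u\neq 1$; a short argument via the orbit structure, or directly, gives that $p^u-1$ is a unit mod $\ell$), forcing $d_i \equiv 0 \bmod \ell$, contradicting $0\notin D_f$. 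Hence no nontrivial element of $\Phi_f$ fixes any edge, so the action on edges is free; since $|\Phi_f|=f$ and $f \mid q\ell$, this is consistent.

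**Action on cells.** Finally, for the statement about rotating the set of cells around $b=w=0$: the $q$ cells correspond to the $q$ starting white vertices (equivalently, by the Wada property each cell is determined by which index plays a distinguished role), and $\sigma$ permutes these $q$ cells; the fixed vertices $P_0$ and $h_0$ (fixed by all of $\Phi_f$, as noted in Section~\ref{sec_frob_auto}) lie on every cell boundary, so $\sigma$ acts on the cyclic arrangement of cells around the fixed vertex $w=0$ as a rotation. Because the cell through the edge $(0,d_i)$ is sent to the cell through $(0, d_i p)$ and $p$ permutes the $k$ orbit representatives cyclically within the $f$ blocks, $\sigma$ acts on the $q=fk$ cells with all orbits of length $f$ (the $\gcd(p-1,\ell)=1$ argument again rules out a fixed cell other than forcing index $0$, which is excluded), giving a rotation of order $f$ by the angle $2\pi/(\text{number of cells in one }\Phi_f\text{-orbit around }w=0)$, matching the picture in Figure~\ref{fig_dessinsP4_F2}. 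The main obstacle I anticipate is the bookkeeping in the second step: verifying precisely that multiplication by $p$ shifts positions in the Frobenius compatible ordering by a constant amount, so that the cyclic (anticlockwise/clockwise) local pattern is genuinely preserved rather than merely permuted as a set — this is what upgrades "$\sigma$ permutes edges and incidences" to "$\sigma$ is an orientation-preserving dessin automorphism."
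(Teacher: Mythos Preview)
Your proposal is correct and follows essentially the same strategy as the paper: describe the action of $\sigma$ as multiplication by $p$ on indices, verify it sends edges to edges via $pD_f\equiv D_f$, check freeness on edges, and then read off the rotation on cells around $w=b=0$. You are in fact more careful than the paper on one point: you explicitly argue that multiplication by $p$ shifts positions in the Frobenius compatible ordering by a constant amount, so that the local cyclic order at each vertex is preserved and $\sigma$ is genuinely an orientation-preserving dessin automorphism; the paper leaves this implicit in its appeal to the incidence pattern of Figure~\ref{fig_FrobCompPattern}. For freeness your route via $\gcd(p^u-1,\ell)=1$ is slightly more indirect than necessary (and your justification of that gcd is a bit sketchy): the paper simply observes that if $\sigma^u$ fixes an edge $(w,w+d_i)$ then it fixes both endpoints, hence fixes their difference $d_i$, which is impossible for $u\not\equiv 0$ since all $\Phi_f$-orbits on $D_f$ have full length $f$ by Lemma~\ref{lemma_fdiffset}. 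That said, your gcd claim is true under the hypotheses --- for every prime $r\mid \ell$ one has $r\nmid p-1$ (since $\gcd(p-1,\ell)=1$) and $r\mid p^f-1$, so $p$ has exact order $f$ modulo $r$, whence $r\nmid p^u-1$ for $0<u<f$ --- so both arguments are valid.
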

\begin{proof}
We suppose that at least one Frobenius compatible ordering of the elements of $D_f$
considered up to cyclic permutations is also Wada compatible and we construct a 
$(q,q,\ell)$-Wada dessin $\cal D$. Since $f|q$ 
(see Lemma \ref{lemma_fdivq} above), the group $\Phi_f$ has a 
suitable size to be a group of automorphisms of $\cal D$ acting on the $q$ cells of $\cal D$.
The cyclic group $\Phi_f$ is generated by the Frobenius automorphism $\sigma$.
%Thus, since $f$ is prime, the following 
%considerations hold for all the other elements $\sigma^k \in \Phi_f$, $k \not\equiv 0 \mod f$,
%as well.\\
We consider the action of $\sigma$ on the points and hyperplanes of 
$\PRmn$ and therefore on the vertices of $\cal D$. As we have seen in Section \ref{sec_frob_auto}
it acts by multiplication of the indices with the prime $p$:
\begin{align*}
\sigma: &P_b \longmapsto P_{bp}\\ &h_w \longmapsto h_{wp}\, .
\end{align*}
According to the notation introduced in Definition \ref{defi_frob_compat}, we write
the edges of \, $\cal D$ \, as \mbox{$e_{\nu,i}^w= (w,w+p^{\nu}d_i)$} \; ($\circ$ --- $\bullet$) \;
and \; \mbox{$e_{\nu,i}^b=(b,b-p^\nu d_i)$} \; ($\bullet$ --- $\circ$)\; with
$\nu \in \ZZ{f}$, \mbox{$i \in \{ 1, \cdots , k\}$}.
The action of $\sigma$ on the edges is given by:
\begin{align*}
\sigma: & \; e_{\nu,i}^w=\left(w,w+p^\nu d_i\right)\longmapsto 
								p\cdot e_{\nu,i}^w=\left(p\cdot w, p(w+p^\nu d_i)\right)\\
	& \; e_{\nu,i}^b=\left(b,b-p^\nu d_i\right)\longmapsto 
									p\cdot e_{\nu, i}^b=\left(p\cdot b, p(b-p^\nu d_i)\right)\; .
\end{align*}
If $w$ and $b$ are not fixed under the action of $\Phi_f$, then none of the edges 
$e_{\nu,i}^w$ and $e_{\nu,i}^b$ is fixed by 
$\sigma$. If, on the contrary, $w$ and $b$ are fixed, then $\sigma(e_{\nu,i}^w) \neq e_{\nu,i}^w$ and 
$\sigma(e_{\nu,i}^b)\neq e_{\nu,i}^b$ only if $d_i$ is not fixed by $p$. Indeed, this is true,
otherwise we could not have chosen $D_f$ with a Frobenius 
compatible ordering of its elements. Thus 
$\Phi_f$ does not fix any of the edges and we say that its action is \textbf{free} on them.\par
We now consider the action of $\Phi_f$ on the cells around the vertices $w$ and $b$ of 
$\cal D$ which are fixed by $\Phi_f$. Since $\Phi_f$ only fixes the difference set $D_f$ 
(see Lemma \ref{lemma_fdiffset}), this means that it only
fixes the vertices $b=w=0$ of the dessin. In fact, according to Singer's construction
(see Section \ref{subsec_dessins}), elements of difference sets $D$ associated
with projective spaces correspond to indices of points on hyperplanes (and by duality of
hyperplanes through points).
As $D_f$ is the only difference set fixed by $\Phi_f$, only $h_0$
and $P_0$ are fixed by $\Phi_f$, from which it follows that $w=b=0$ are 
the only vertices fixed by $\Phi_f$.\par
Recall the incidence pattern of each vertex 
given in Figure \ref{fig_FrobCompPattern} where the elements of $D_f$ are 
ordered in a Frobenius compatible way. The incidence pattern is fixed
up to cyclic permutations of the elements of $D_f$.
The action of 
$\sigma$ on the cells ${\cal C}_c$, $c\in \mathbb{Z}\slash q\mathbb{Z}$ 
around $w=0$ results in a mapping of every cell to a following cell 
(see Figure \ref{fig_FrobCell}) such that:
\begin{align}\label{eq_cell_mapping}
&\sigma: {\cal C}_c \longmapsto {\cal C}_{c+mk \mod q},\nonumber\\ 
&c\in \mathbb{Z}\slash q\mathbb{Z}\; , 
m \in \mathbb{Z}\slash f\mathbb{Z}\; , k=\frac{q}{f}\; .
\end{align}
We obtain the following relation between powers of $p$ and the cells we run through:
\begin{center}
\begin{tabular}{r|l}
$p$-powers & cells\\
\hline\\
$p^0$ & ${\cal C}_c$\\
$p^1$ & ${\cal C}_{c+mk\mod q}$\\
$p^2$ & ${\cal C}_{c+2mk\mod q}$\\
$p^3$ & ${\cal C}_{c+3mk\mod q}$\\
$\vdots$ & $\vdots$\\
$p^{f-1}$ & ${\cal C}_{c+(f-1)mk\mod q}$
\end{tabular}
\end{center}
Evidently, $\sigma$ describes a rotation of the set of cells around 0. 
The order of the rotation is $f$.
\end{proof}
%------------------IMAGE------------------------
\begin{figure}[!h]
	\centering
	\includegraphics[width=.7\linewidth]{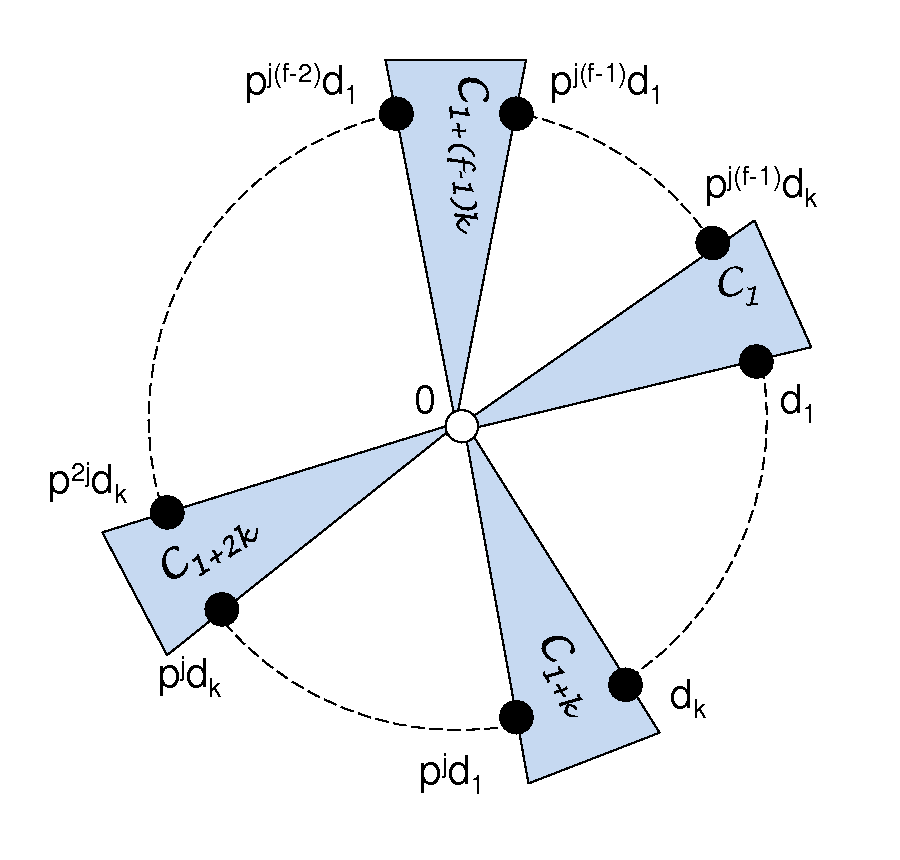}
	\caption{\textit{Local ordering of the cells around $w=0$ with a Frobenius difference set $D_f$. The elements of $D_f$ are ordered in a Frobenius compatible way.}}
	\label{fig_FrobCell}
\end{figure}
%--------------
%
\begin{remarks}
\begin{enumerate}
\item 
A consequence of the above proof is that $\Phi_f$ acts freely not only on the edges but also
on the cells and it divides them into $k$ orbits
of length $f$.
\item
We consider the vertices fixed by $\Phi_f$ as fixed points $x$ on the target surface 
$X$ of the embedding.
Then, introducing local coordinates $z$ we may suppose that $z(x)=0$ and that 
$\Phi_f$ acts on a neighbourhood of $z=0$
by multiplication with powers of a root of unity $\zeta_f$:
\[
z \longmapsto \zeta_f^a z\; , \quad a \in \mathbb{Z}\slash f \mathbb{Z} \, .
\]
The root of unity $\zeta_f$ is called {\bf multiplier} of 
the automorphism $\sigma \in \Phi_f$ on $X$ (see \cite{StWo01}). 
\end{enumerate}
\end{remarks}
\begin{example}
For the projective space $\PR{4}{2}$, we have considered in several of the above examples, we have
$f=m+1=5$ and $p \neq 5$, $p \not\equiv 1 \mod 5$, thus the conditions 
of Proposition \ref{prop_frobauto}
are satisfied. We construct a $( 15,15,31 )$-Wada dessin with the Frobenius difference set:
\[
D_5 =\{1,3,15,2,6,30,4,12,29,8,24,27,16,17,23 \}\mod 31
\]
whose elements are ordered in a Frobenius compatible way (see Example \ref{example_P_F2_1}
and \ref{example_P_F2_2}). The cyclic group 
$\Phi_5\cong\textrm{Gal}(\F{{2^5}}\slash \F{2})$, generated
by the Frobenius automorphism, acts freely on 
the edges and on the cells. On the cells it acts with a rotation by the angle 
$\omega = \frac{2 \pi}{5}$
around the fixed vertices $b=w=0$. The cells are subdivided into three orbits of length five.\par
Other dessins, whose parameters satisfy the conditions of Proposition \ref{prop_frobauto},
are given in Table \ref{table_Frobenius_spaces}. For most of the projective spaces listed there
$\ell$ is prime, so we are sure we may construct Wada dessins regardless of the chosen
element orderings of the associated Frobenius difference set.\par
For the spaces $\PR{4}{3}$ and $\PR{{10}}{2}$ the integer $\ell$ is not prime, thus we have to check whether
Frobenius compatible orderings of elements are also Wada compatible. According
to Proposition \ref{prop_wadaord}, we only need to check differences of 
the first block of elements and the one difference at the
'transition' between the first and the second block. Other differences are
multiplications with powers of the prime $p$ for which we have $gcd(\ell,p)=1$.\par
For both spaces the prime factors of the integer $\ell$ 
are quite big. In fact, we have $\ell=121=11 \cdot 11$ and $\ell=2047=89 \cdot 23$, so
it is very likely to find Frobenius and Wada compatible orderings. 
For instance, for $\PR{4}{3}$ it is easy to check
that the ordering of the elements of the following Frobenius difference set is 
Frobenius and Wada compatible.
\begin{align*}
D_5=\{&1,4,7,11,13,34,25,67,3,12,21,33,39,102,75,80,9,36,63,99,117,64,104,119\\
	&27,108,68,55,109,71,70,115,81,82,83,44,85,92,89,103\} \mod 121 \, .
\end{align*}
In this case, the group generated by the Frobenius automorphism is cyclic of order five
and it divides the elements of $D_5$ into eight orbits of length five.
\begin{center}
\begin{table}
\begin{tabular}{c|c|c|c|c|c}
\multicolumn{6}{l}{}\\
\hline
 & $q$ & $\ell$ & $f$ & Wada & Frobenius\\
\hline
$\PR{2}{5}$ & $6$ & $31$ & 3 & $( 6,6,31 )$ & $\Phi_3\cong$Gal$(\F{{5^3}}\slash \F{5})$\\[2ex]
$\PR{4}{2}$ & $15$ & $31$ & 5 & $( 15,15,31 )$ & $\Phi_5\cong$Gal$(\F{{2^5}}\slash \F{2})$\\[2ex]
$\PR{4}{3}$ & $40$ & $121=11^2$ & 5 & $( 40,40,121 )$ & $\Phi_5\cong$Gal$(\F{{3^5}}\slash \F{3})$\\[2ex]
$\PR{4}{7}$ & $400$ & $2801$ & 5 & $( 400,400,2801 )$ & $\Phi_5\cong$Gal$(\F{{7^5}}\slash \F{7})$\\[2ex]
$\PR{6}{2}$ & $63$ & $127$   & 7 & $( 63,63,127 )$    & $\Phi_7\cong$Gal$(\F{{2^7}}\slash \F{2})$\\[2ex]
$\PR{6}{3}$ & $364$ & $1093$ & 7 & $( 364,364,1093 )$ & $\Phi_7\cong$Gal$(\F{{3^7}}\slash \F{3})$\\[2ex]
$\PR{6}{5}$ & $3906$ & $19531$ & 7 & $( 3906,3906,19531 )$ & $\Phi_7\cong$Gal$(\F{{5^7}}\slash \F{5})$\\[2ex]
$\PR{{10}}{2}$ & $1023$ & $2047=23\cdot 89$ & 11 & $( 1023,1023,2047 )$(?) & $\Phi_{11}\cong$Gal$(\F{{2^{11}}}\slash \F{2})$\\
\hline
\multicolumn{6}{l}{}\\
\multicolumn{6}{l}{(?) = only if there exists a Frobenius compatible ordering which is 
also Wada compatible.}
\end{tabular}
\caption{\textit{Some projective spaces whose parameters satisfy the conditions of 
Proposition \ref{prop_frobauto}.}}
\label{table_Frobenius_spaces}
\end{table}
\end{center}
\end{example}
%
%------SECTION------
\section{Concluding remarks}\label{section_concluding_remark}

We consider projective spaces $\PR{m}{n}$ with general $m$ and $n$, 
which do not necessarily satisfy 
the conditions of Proposition \ref{prop_frobauto}. In this case, 
it is more difficult to predict whether 
the cyclic group $\Phi_f$ or a subgroup $\Phi_g \subset \Phi_f$ is a group of automorphisms 
of associated Wada dessins.\par
Let $\Phi_g \subset \Phi_f$ be generated by a power $\sigma^s$ of the Frobenius 
automorphism, with \mbox{$s \in (\ZZ{f})$} and $gcd(s,f) \neq 1$. 
The action of $\Phi_g$ on the elements of the Frobenius difference set
we use to construct the associated dessin is a multiplication by
the integer $t=p^s$.\par
As we have seen in Sections \ref{section_Frob_diff_set} 
and \ref{subsec_Frob_compat}, for $\Phi_f$ to be a group of automorphisms
of the constructed dessin we need not only a Frobenius difference set $D_f$, 
but we also need the ordering of the elements of $D_f$ to
be compatible with the action 
of $\Phi_f$. Equivalent conditions are necessary if we need a subgroup
$\Phi_g \subset \Phi_f$ to be a group of automorphisms of the dessin.
This means that under the action of $\Phi_g$ the elements of a fixed difference set 
$D_g$ have to be subdivided into orbits of equal length.
This can be achieved if the following conditions hold:
\begin{enumerate}
\item
The order $g$ of $\Phi_g$ divides $q$, the number of elements of $D_g$;
\item\label{condition_2}
The integer $t$ satisfies $gcd(t-1,\ell)=1$.
\end{enumerate}
We need the first condition since it implies that $\Phi_g$ has a suitable size to subdivide
the elements of $D_g$ into orbits of the same length $g$.\par
The second condition makes sure that all orbits have the 
same length. In fact, having $gcd(t-1,\ell)=1$ means that the relation
\[
t \cdot d_i \equiv d_i \mod \ell \quad \forall d_i  \in D_g
\] 
is satisfied only for $d_i \equiv 0 \mod \ell$. But we exclude this possibility, otherwise
$D_g$ would contain the only orbit $\{0\}$ of length one and all other orbits would have length
$g$. This would be a contradiction to the 
first condition $g \mid q$, so we have $d_i \not\equiv 0 \mod \ell$ for all $d_i \in D_g$ 
and all orbits have length $g$.\par
Under these conditions, it is possible to order the elements of a difference set
fixed by $\Phi_g$ in a way compatible with its action, i.e. such that $\Phi_g$ acts
permuting the elements cyclically. If the ordering is also Wada compatible, then
$\Phi_g$ is a group of automorphisms of the constructed Wada dessin.
Similarly to the action of $\Phi_f$ (see Section \ref{sec_nice_case}) also
$\Phi_g$ acts on the cells by rotating them around the fixed vertices $w=b=0$
(for a more detailed description see \cite{SartiPhD10}).
We conclude with some examples:
%----------EXAMPLE---------------------------------
\begin{example}
For the projective space $\PR{4}{4}$ we have $q=85$ and $\ell=341$. The group generated by the 
Frobenius automorphism is the cyclic group $\Phi_{10}$. The order of $\Phi_{10}$ does not divide $q$,
so it cannot be a group of automorphisms of any of the dessins associated with $\PR{4}{4}$.
Nevertheless, the subgroup $\Phi_5 \subset \Phi_{10}$ with $5=m+1$ can be. The power $\sigma^2$ 
of the Frobenius automorphism 
can be chosen as a generator of $\Phi_5$. The action of $\Phi_5$ on the vertices 
of associated dessins 
is expressed by multiplication with $2^2$. Since $gcd(2^2-1,341)=1$, 
also the second of the two above conditions is satisfied.
This means that the elements of the difference set $D_5$ fixed by $\Phi_5$ are 
subdivided into $\Phi_5$-orbits of equal length: there are 17 orbits of legth five. 
The elements may, therefore, be ordered in a way compatible with the action of $\Phi_5$. If
this ordering is such that all differences of consecutive elements $(d_{i}-d_{i+1})$, 
$d_i, d_{i+1} \in D_5$ are prime to $\ell=341=31\cdot 11$, i.e. if it is Wada compatible, we may
construct a $( 85, 85, 341 )$-Wada dessin for which $\Phi_5$ is a group of automorphisms
acting freely on the 85 cells.
\end{example}
\begin{example}
For $\PR{6}{4}$ we have $q=1365$, $\ell=5461$.  The Frobenius automorphism generates the cyclic group
$\Phi_{14}$ with $gcd(14,1365)=7$. This means that the subgroup $\Phi_7 \subset \Phi_{14}$ 
has a suitable size to be a
group of automorphisms of possible Wada dessins associated with $\PR{6}{4}$. As in the 
above example, we may choose the power 
$\sigma^2$ of the Frobenius automorphism as a generator of $\Phi_7$. The action of $\Phi_7$
on the vertices of associated dessins is expressed by multiplication with powers $2^2$. Since
$gcd(2^2-1, 5461)=1$, also the second of the two above conditions
is satisfied and $\Phi_7$
divides the elements of the associated difference set $D_7$ into $195$ 
orbits of length $7$. We order the elements of the difference set
$D_7$ in a way compatible with the action of $\Phi_7$. If this ordering is also Wada compatible 
we may construct a $( 1365,1365,5461 )$-Wada dessin for which $\Phi_7$ is a group of automorphisms acting freely on the cells.
\end{example}
%--------END EXAMPLE--------------------------
In general, it is not an easy task to construct difference sets with parameters $( v,k, \lambda )$
or, more specifically, with parameters $(\ell, q, \lambda )$ if they are associated with projective
spaces $\PR{m}{n}$. The difference sets used in our examples are known difference sets we took
from a difference set list in \cite{Baumert71}. In \cite{Baumert71} as well as in \cite{HugPip85} some
construction techniques are described. Of course, knowing that a difference set is a Frobenius 
difference set associated with a projective space $\PR{m}{n}$ and that its elements may be ordered
in a Frobenius compatible way may help in the construction (see e.g. the sections about
multipliers of difference sets in \cite[Section III.c]{Baumert71} and in \cite[Section 2.5]{HugPip85}).
In this case the structure of the difference set is, in fact, completely determined by the action of the 
cyclic group $\Phi_f$ or of a subgroup $\Phi_g$ of it, as we have seen in Section \ref{subsec_Frob_compat} 
and in the present section. Nevertheless, for projective spaces $\PR{m}{n}$ with arbitrary parameters
$m$ and $n$ it is difficult to predict when such orderings occur, as we have remarked above.
More research is needed in this direction.\par
The question about the existence of Wada compatible orderings of elements has been answerd by 
\cite{StWo01} and more recently by \cite{Goertz09} for difference sets associated with projective 
planes $\PR{2}{n}$. It is still an open and challenging question for projective spaces of 
higher dimension.

\section*{Acknowledgements}
This is a part of the PhD thesis of the author at the University of Frankfurt. She would like 
to warmly thank her advisors Jürgen Wolfart and Gareth Jones
for introducing her to the beautiful topic of 
dessins d'enfants and for many useful hints and discussions.\par
She would also like to thank Benjamin Mühlbauer, Alessandra Sarti and Ayberk Zeytin for 
carefully reading the first version of this paper.
Many thanks also to the anonymous reviewer of this paper for very useful comments. 

\bibliographystyle{alpha}
\bibliography{FrobeniusCompatibility}
\end{document}